\newtheorem{satz}{Theorem}
\newtheorem{proposition}[satz]{Proposition}
\newtheorem{theorem}[satz]{Theorem}
\newtheorem{lemma}[satz]{Lemma}
\newtheorem{definition}[satz]{Definition}
\newtheorem{corollary}[satz]{Corollary}
\newtheorem{remark}[satz]{Remark}
\def\Z{\mathbb {Z}}
\def\F{\mathbb {F}}
\def\E{\mathsf{E}}
\def\a{\alpha}
\def\d{\delta}
\def\o{\omega}
\def\({\big (}
\def\){\big )}
\def\G{\Gamma}
\def\le{\leqslant}
\def\ge{\geqslant}
\def\_phi{\varphi}
\def\eps{\varepsilon}
\def\Gr{{\mathbf G}}
\def\FF{\widehat}
\def\t{\tilde}
\def\la{\lambda}
\def\SL{{\rm SL}}
\def\cov{{\rm cov}}
\newcommand{\bp}{\bigskip}
\author{I.D. Shkredov}
\title{On some multiplicative properties of large difference sets 
}
\date{}
\begin{document}
	\maketitle


\begin{center}
	Annotation.
\end{center}

{\it \small
    In our paper we study multiplicative properties of difference sets $A-A$ for large sets $A \subseteq \Z/q\Z$ in the case of  composite $q$. 
    We  obtain a quantitative version of a result of A. Fish about the structure of the product sets $(A-A)(A-A)$. 
    Also, we show that the multiplicative covering number of any difference set is always small. 
}
\\

\section{Introduction}

The landmark question about solvability of equations of the form $f(x_1,\dots,x_n) = 0$, where $f\in \Z[x_1,\dots,x_n]$ and the variables $x_j \in X_j$ belong to some ``large'' but unspecified sets $X_j$ of the prime field $\F_q$ was firstly posed, probably, in  \cite{sarkozy2005sums}.  
Interesting in its own right the problem has a clear  connection  with the sum--product phenomenon \cite{TV} due to the fact that as a rule the polynomial $f$ includes both the addition and the multiplication. 
This theme becomes rather popular last years, e.g., see  
\cite{gyarmati2008equationsI}---\cite{sarkozy2005sums},
\cite{sh_mono}, \cite{shparlinski2008solvability} and many other papers.

The question about a partial resolution of some specific equations  $f(x_1,\dots,x_n)=0$ in large subrings of rings $\Z_q := \Z/(q \Z)$ for composite $q$  was firstly considered by A. Fish in \cite{fish2018product} (nevertheless, let us remark that a similar problem was formulated in \cite[Problem 5]{gyarmati2008equationsII}). In particular, in \cite[Corollary 1.2]{fish2018product} A. Fish considered the polynomial $f(x_1,x_2,x_3,x_4) = (x_1-x_2)(x_3-x_4)$ and proved  the following result.

\begin{theorem}
    Let $q$ be a 
    positive integer, 
    $A, B\subset \Z_q$ be sets, $|A| = \a q$, $|B|=\beta q$, and suppose that $\a \ge \beta$. 
    Then there is $d|q$ 
    with  
\begin{equation}\label{f:Fish_intr}
    d \le F(\beta) 
\end{equation}
    and such that 
\begin{equation}\label{f:Fish_2_intr}
    d \cdot \Z_q \subseteq (A-A)(B-B) \,.
\end{equation}
\label{t:Fish_intr}
\end{theorem}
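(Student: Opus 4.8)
The plan is to write the product as a union of multiplicative dilates,
$$(A-A)(B-B)=\bigcup_{s\in A-A}s\,(B-B),$$
and to study it through the filtration of $\Z_q$ by its ideals $e\Z_q$ ($e\mid q$), using that $e\Z_q\cong\Z_{q/e}$ as a group and that an element of level $g=\gcd(\cdot,q)$ lies in $g\Z_q$. I would reduce the inclusion $d\Z_q\subseteq(A-A)(B-B)$ to a family of multiplicative covering statements, one in each unit group $(\Z_{q/g})^*$ with $d\mid g\mid q$: a primitive multiple $r$ of $g$ should be written as $st$ with $s\in A-A$ of level $g$ and $t\in B-B$ a unit, and after dividing out $g$ this is exactly the assertion that the images of $A-A$ and $B-B$ multiply onto all of $(\Z_{q/g})^*$.

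The divisor $d$, and its bound, come from locating the scale at which $B$ concentrates. I would pass to the coarsest ideal on which $B$ is dense and equidistributed: starting from $\Z_q$, whenever $B$ (or a translate) puts more than its fair share into a proper sub-ideal, I refine into that sub-ideal. Each refinement strictly increases the relative density, which cannot exceed $1$, so the procedure halts at an ideal $e\Z_q$ with $e\le 1/\beta$, on which the surviving part of $B$ is dense and spread across all residues of every proper divisor. Crucially, on this scale the difference set becomes unit--rich: inside the ring $\Z_{q/e}\cong e\Z_q$ the image of $B-B$ meets a positive proportion, bounded below in terms of $\beta$, of the units $(\Z_{q/e})^*$. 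Without this step the claim is false outright, as the example $B=e\Z_q$ shows, where $B-B=e\Z_q$ contains no units at all.

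The covering step then takes place inside $\Z_{q/e}$ and its further quotients. Writing $U,V$ for the unit parts of the images of $A-A$ and $B-B$, I would use the elementary fact that in a finite abelian group $G$ one has $XY=G$ as soon as $|X|+|Y|>|G|$; this disposes of every quotient in which the densities exceed $\tfrac12$. For the genuinely small quotients I would invoke Kneser's theorem: $U\cdot V$ is a union of cosets of a proper subgroup $H=\Stab(UV)$ whose index is bounded by a function of $\beta$, and since $A-A$ is at least as dense as $B-B$ it supplies enough dilates to sweep out the missing cosets. Assembling these coverings over all levels $g$ with $d\mid g\mid q$ yields $d\Z_q\subseteq(A-A)(B-B)$ with $d$ a bounded multiple of $e$, hence $d\le F(\beta)$. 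The same Kneser bookkeeping controls the multiplicative covering number $\cov(B-B)$ promised in the abstract, namely how many dilates of the difference set are needed to exhaust $\Z_q^*$.

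The main obstacle is making the final bound $d\le F(\beta)$ genuinely independent of $q$. The subgroup lattice of $\Z_q^*$ can be complicated, so the crux is to show that the Kneser stabilizers forced above are of the arithmetic form $\ker\big((\Z_{q/g})^*\to(\Z_e)^*\big)$ with $e\mid q$ controlled purely by $\beta$, and that the multiplicative lower bound on the unit part of $B-B$, once the concentration scale has been fixed, does not decay as the number or size of the prime factors of $q$ grows. Coupling the multiplicative group structure to the additive ideal structure of $\Z_q$ uniformly in $q$ is where the real work lies; the concentration and covering estimates themselves are comparatively routine.
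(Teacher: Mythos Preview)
Your sketch has a genuine gap at the Kneser step. You assert that the stabilizer $H=\Stab(UV)$ in $(\Z_{q/g})^*$ has index bounded by a function of $\beta$, but Kneser's theorem gives no such bound: it says only that $|UV|\ge|UH|+|VH|-|H|$, and when $|U|+|V|\le|G|$ the stabilizer may perfectly well be trivial. Your further hope that $H$ is ``of the arithmetic form $\ker\big((\Z_{q/g})^*\to(\Z_e)^*\big)$'' is also false in general: already for prime $q=p$ the group $\Z_p^*$ is cyclic of order $p-1$ and has a subgroup of every order dividing $p-1$, almost none of which are reduction kernels. So neither the index nor the shape of $H$ is controlled, and the clause ``$A-A$ supplies enough dilates to sweep out the missing cosets'' has no force without an independent argument that a difference set of positive density meets every coset of every bounded--index multiplicative subgroup --- which is itself a Weil--type character estimate, not a Kneser consequence. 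You correctly flag this additive--multiplicative coupling as ``where the real work lies''; the proposal does not do that work.

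The paper bypasses the multiplicative--group route entirely. The main proof (Section~\ref{sec:Fish_improvement}) regularizes $A$ and then writes the solution count for $(a_1-a_2)(b_1-b_2)=\lambda$ on the Fourier side as a sum of Kloosterman sums $K_{q_2}(\lambda z,\xi)$ over divisors $q_2\mid q$, to which the Weil bound $|K_{q_2}|\le 2\sqrt{q_2}\,\tau(q_2)$ applies; this is precisely the missing additive--multiplicative input, and it yields $d\ll\exp(C\beta^{-4})$. A second, more elementary proof (Section~\ref{sec:covering}) replaces Kneser by an additive pigeonhole: for each $n\in\Z_q^*$ the set $n\cdot\{0,1,\dots,k_*\}+B$ with $k_*\approx\beta^{-1}$ is too large to be injective, forcing $n\in[k_*]^{-1}(B-B)$ and hence $\cov^\times(B-B)\le\beta^{-1}+1$; a second pigeonhole in $\Z_q^{k_*}$ then locates $d$ with $d\cdot[k_*]\subseteq A-A$, whence $d\cdot\Z_q\subseteq(A-A)(B-B)$. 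Both arguments exploit that $B-B$ arises from an \emph{additive} construction, information that your purely multiplicative Kneser step throws away.
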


Here we use the following standard notation \cite{TV}, namely,
given two sets $A,B\subset \Z_q$, define  
the {\it sumset} 
of $A$ and $B$ as 
$$A+B:=\{a+b ~:~ a\in{A},\,b\in{B}\}\,.$$
In a similar way we define the {\it difference sets} $A-A$, the {\it higher sumsets}, e.g., $2A-A$ is $A+A-A$ and, further, the {\it products sets} 
$$AB:=\{ab ~:~ a\in{A},\,b\in{B}\} $$
the {\it higher product sets} and so on. 
Finally, if $A \subseteq \Z_q$ and $\la \in \Z_q$, then we write 
\[
    \la \cdot A = \{ \la a ~:~ a\in A\} \,.
\]

It is easy to see that in this generality one cannot have $\Z_q = (A-A)(B-B)$ in inclusion \eqref{f:Fish_2_intr} for all sets $A,B$ and thus we indeed need this additional (but small)  divisor $d$. 
In contrary, for prime $q$ the divisor $d$ can be omitted and the questions of this type were studied in \cite{hart2007sum}
and 
\cite{shparlinski2008solvability}.
In paper \cite{fish2018product} the dependence on    $F(\beta)$ was triple exponential on $\beta^{-1}$. Using a series of other methods, we improve and generalize the last result in several directions. 
The signs $\ll$ and $\gg$ below  are the usual Vinogradov symbols.

\begin{theorem}
    Let $q$ be a 
    positive integer, 
    $A, B\subset \Z_q$ be sets, $|A| = \a q$, $|B|=\beta q$, and suppose that $\a \ge \beta$. 
    Then there is $d|q$ 
    with  
\begin{equation}\label{f:Fish_new_intr}
    d \ll  \exp( C\beta^{-4}) \,,
\end{equation}
    where $C>0$ is an absolute constant and such that 
\begin{equation}\label{f:Fish_new_2_intr}
    d \cdot \Z_q \subseteq (A-A)(B-B) \,.
\end{equation}
\label{t:Fish_new_intr}
\end{theorem}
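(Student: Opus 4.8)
The plan is to reduce the inclusion $d\cdot\Z_q \subseteq (A-A)(B-B)$ to the positivity of a representation function and then to estimate that function by Fourier analysis on $\Z_q$. For $n\in\Z_q$ set
\[
R(n) = \#\{(a_1,a_2,b_1,b_2)\in A^2\times B^2 ~:~ (a_1-a_2)(b_1-b_2)=n\} = \sum_{su=n} r_A(s)\,r_B(u),
\]
where $r_A(s)=(1_A\circ 1_A)(s)$ and $r_B(u)=(1_B\circ 1_B)(u)$ are the nonnegative difference--counting functions, with $\h{r_A}(\xi)=|\h{1_A}(\xi)|^2$ and likewise for $B$. It suffices to find a divisor $d\mid q$, as small as the theorem allows, with $R(dt)>0$ for every $t\in\Z_q$, since this is exactly $dt\in(A-A)(B-B)$. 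Equivalently, writing $(A-A)(B-B)=\bigcup_{u\in B-B} u\cdot(A-A)$, I am asking that the dilates of the large symmetric set $A-A$ by the elements of $B-B$ cover the ideal $d\cdot\Z_q$.

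For the heuristic main term, the total number of quadruples is $|A|^2|B|^2=\a^2\b^2 q^4$, so a typical product value is attained on the order of $\a^2\b^2 q^3$ times, and the goal is to secure this as a genuine lower bound for $R(dt)$. I would expand $r_A$ and $r_B$ by Fourier inversion and isolate the trivial frequencies: the term $\xi=\eta=0$ produces the expected main term, while the remainder is a bilinear exponential sum weighted by $|\h{1_A}(\eta)|^2$ and $|\h{1_B}(\psi)|^2$ coupled through the multiplicative constraint $su=dt$. The crucial structural input is that the large spectra
\[
\Spec_\theta(A)=\{\eta ~:~ |\h{1_A}(\eta)|\ge\theta|A|\}, \qquad \Spec_\theta(B)=\{\psi ~:~ |\h{1_B}(\psi)|\ge\theta|B|\}
\]
are small: by Parseval $|\Spec_\theta(A)|\le\theta^{-2}\a^{-1}$ and $|\Spec_\theta(B)|\le\theta^{-2}\b^{-1}$. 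Thus away from these few frequencies the error is controlled, and the entire difficulty is concentrated in the interaction of the at most $O(\b^{-O(1)})$ large frequencies with the ring structure of $\Z_q$.

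The choice of $d$ is forced precisely by these large frequencies. A frequency $\eta\in\Spec_\theta(A)$ with large $\gcd(\eta,q)$ signals that $A$ is concentrated on a sub-ideal of $\Z_q$, and likewise for $B$; such concentration is the only obstruction to equidistribution of the products $su$. The natural remedy is to build $d$ from the prime-power factors of $q$ at which $A$ or $B$ concentrates, passing to the ideal $d\cdot\Z_q$ on which, by the Chinese Remainder Theorem, $A-A$ and $B-B$ become equidistributed. To bound $d$ I would quantify the concentration: since the density $\b$ factorizes through the prime-power components of $\Z_q$, the number and size of the bad components is limited, and a counting of how concentrated a set of density $\b$ can be across divisors of $q$ should give $d\ll\exp(C\b^{-4})$, the exponential reflecting the product over bad prime powers and the fourth power reflecting the two spectral inputs (from $A$ and from $B$) combined through a Cauchy--Schwarz estimate.

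The hard part will be the bilinear exponential sum over $\Z_q$ with zero divisors: unlike the prime-field case, the multiplicative convolution $\sum_{su=n} r_A(s)\,r_B(u)$ receives substantial contributions from pairs $(s,u)$ where $s$ or $u$ is a zero divisor, and these cannot simply be discarded. I expect to handle this by splitting according to $g=\gcd(u,q)$, treating the unit stratum via an equidistribution (Kloosterman-type) bound after passing to $\Z_{q/g}$, and absorbing the remaining strata into the choice of $d$; Cauchy--Schwarz against the multiplicative energy $\E^\times(A-A,B-B)$ should linearize the constraint and let the smallness of the spectra finish the estimate. Making this uniform in the target $t$, so that $R(dt)>0$ for every $t$ and not merely for almost all $t$, is the delicate point, and it is exactly here that one is forced to spend the divisor $d$ rather than to improve it.
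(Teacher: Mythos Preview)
Your outline has the right broad architecture --- Fourier expansion, Kloosterman-type inner sums, stratification by divisors of $q$ --- but the mechanism that actually produces $d\ll\exp(C\beta^{-4})$ is missing, and your explanation of the exponent $4$ (``two spectral inputs combined through Cauchy--Schwarz'') does not match what happens. The paper does not route through the large spectrum $\Spec_\theta$ or through $\E^\times(A-A,B-B)$ at all. It makes two preparatory moves before any Fourier analysis: first it strips out of $q$ every prime power $\le M$, at cost $\le M^{\pi(M)}$ to $d$; second it applies an iterative regularization lemma to pass to $A_*\subseteq A$ lying in a single coset modulo $q/q_*$ and satisfying $\max_{\xi}|A_*\cap\pi_{\tilde q}^{-1}(\xi)|\le|A_*|/\tilde q^{\,1-\eps}$ for every $\tilde q\mid q_*$ with $\tilde q\ge M$. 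This \emph{pointwise} density control (not a spectral statement) is what closes the estimate: the inner sum over $a_1,a_2\in A_*$ is rewritten as $q_2^{-1}\sum_\xi|\widehat\eta_{q_2}(\xi)|^2 K_{q_2}(\lambda z,\xi)$, the Weil bound gives $|K_{q_2}|\le 2\sqrt{q_2}\,\tau(q_2)$, and regularity gives $\|\eta_{q_2}\|_2^2\le|A_*|^2/q_2^{1-\eps}$. With $\eps=1/4$ the total error is $\ll M^{-1/4}|A_*|^2|B_*|$, which forces $M\asymp\beta^{-4}$; the exponent $4$ comes from the square-root Kloosterman saving against the $q_2^{1-\eps}$ regularity, and then $d\ll M^{\pi(M)}\ll\exp(O(\beta^{-4}))$.

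The substantive gap in your plan is therefore the regularization. ``Build $d$ from the bad prime powers where $A$ or $B$ concentrates'' is the right instinct, but a set of density $\beta$ can concentrate along a \emph{tower} of nested sub-ideals at many scales, and a single pass through $\Spec_\theta$ does not control all moduli $\tilde q\mid q$ simultaneously; nor does an $\E^\times$ bound linearize the constraint in a way that survives the zero-divisor strata. The paper's iterative lemma is precisely the device that converts your heuristic into a uniform $L^2$ bound on the local density function $\eta_{q_2}$ for every $q_2\ge M$ at once; without it, the stratified sum you propose cannot be summed, and no argument you have sketched pins down $\beta^{-4}$ rather than some other (or no) bound.
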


In \cite{fish2018product} the author posed a series of questions in  much more general form,  
as well as 
for other polynomials $f(x_1,\dots,x_n)$.
Using different approaches we partially resolve some of them, see Sections \ref{sec:general_case} and \ref{sec:group_Fish}. 
In particular, we have deal with the equation  
\[
    (a_1-b_1) (a_2 - b_2) \equiv \la \pmod q \,, \quad \quad (a_1,a_2) \in \mathcal{A},\, (b_1,b_2) \in \mathcal{B} 
    \,,
\]
and 
\[
    (a_1-b_1)^2 -  (a_2 - b_2)^2 \equiv \la \pmod q \,, \quad \quad (a_1,a_2) \in \mathcal{A},\, (b_1,b_2) \in \mathcal{B} 
\]
for rather general two--dimensional sets $\mathcal{A}, \mathcal{B} \subseteq \Z_q \times \Z_q$ and composite numbers $q$ with some restrictions on its prime divisors, see Theorems \ref{t:Fish-Vinh}, \ref{t:Fish-Vinh+} 
and Theorem \ref{t:Fish_action} below. 
As an example, we formulate a part of Theorem \ref{t:Fish-Vinh}.

\begin{theorem}
    Let $q$ be a squarefree  number, $\mathcal{A}, \mathcal{B} \subseteq \Z_q^2$ be sets, 
    $|\mathcal{A}| = \a q^2$, $|\mathcal{B}|=\beta q^2$, and suppose that $\a \ge \beta$. 
    Then 
\begin{equation}\label{f:Fish-Vinh1_intr}
    d  \cdot \Z^*_q \subseteq 
    \{ (a_1-b_1) (a_2 - b_2) ~:~ (a_1,a_2) \in \mathcal{A},\, (b_1,b_2) \in \mathcal{B} \} 
\end{equation} 
    with 
\[
    d \ll \exp (O(\o(q) \log \o(q) - \log \beta)    \,.
\]
    In particular, for $\mathcal{A} = \mathcal{B}$ one has with the same $d$ that 
\begin{equation}\label{f:Fish-Vinh1.5_intr}
    d  \cdot \Z_q \subseteq 
    \{ (a_1-b_1) (a_2 - b_2) ~:~ (a_1,a_2) \in \mathcal{A},\, (b_1,b_2) \in \mathcal{B} \} 
    \,.
\end{equation} 
\end{theorem}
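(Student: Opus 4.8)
The goal is to show that for a suitable divisor $d\mid q$ the representation function
\[
 N(\la) = \#\{ (a,b)\in \mathcal{A}\m\mathcal{B} ~:~ (a_1-b_1)(a_2-b_2) = \la \}
\]
is strictly positive for every $\la\in d\cdot\Z^*_q$. Since $q$ is squarefree, the Chinese Remainder Theorem identifies $\Z_q$ with $\prod_{p\mid q}\F_p$, and I would split the prime divisors of $q$ into a \emph{good} part $q_1$ and a \emph{bad} part $q_2$, setting $d=q_2$ and $q=q_1 q_2$. For $\la\in d\cdot\Z^*_q$ the target residue is a unit modulo every good prime and vanishes modulo every bad prime; thus the equation asks the product of the two differences to equal a prescribed nonzero value on the good part and to be degenerate (one factor vanishing) on the bad part.

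The plan is to detect the equation with additive characters. Writing $e_q(x)=\exp(2\pi i x/q)$ and $\widehat{\mathcal{A}}(\xi)=\sum_{a}\mathcal{A}(a)e_q(-\xi\cdot a)$, one has
\[
 N(\la) = \frac{1}{q^2}\sum_{\xi\in\Z_q^2} \widehat{\mathcal{A}}(\xi)\,\overline{\widehat{\mathcal{B}}(\xi)}\,\overline{\widehat{H_\la}(\xi)}, \qquad \widehat{H_\la}(\xi)=\sum_{st=\la} e_q(-\xi_1 s-\xi_2 t).
\]
The arithmetic heart is the estimate for $\widehat{H_\la}(\xi)$. On the good part, $st=\la$ with $\la$ a unit forces $t=\la s^{-1}$ and the sum becomes a Kloosterman sum; by the twisted multiplicativity of such sums for squarefree modulus together with Weil's bound, its good-part factor is $\ls 2^{\o(q_1)} q_1^{1/2}\gcd(\xi_1,\xi_2,q_1)^{1/2}$. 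On the bad part $st\equiv 0$, so the sum runs over the union of the two coordinate axes and contributes a factor of size $\asymp p$ precisely when one coordinate of $\xi$ vanishes modulo $p$, and of size $O(1)$ otherwise. The trivial frequency $\xi=0$ produces the main term $q^{-2}|\mathcal{A}||\mathcal{B}|\,\varphi(q_1)\prod_{p\mid d}(2p-1)$.

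For the remaining frequencies I would bound the error by Cauchy--Schwarz and Parseval, $\sum_{\xi}|\widehat{\mathcal{A}}(\xi)|^2=q^2|\mathcal{A}|$. The decisive point is that the bad factors are large only for $\xi$ whose coordinates are divisible by (parts of) $q_2$; such frequencies are supported on the subgroup $q_2\cdot\Z_q^2$, and there $\widehat{\mathcal{A}}(q_2\eta)$ is the Fourier transform of the reduction of $\mathcal{A}$ modulo $q_1$. Hence, \emph{provided $\mathcal{A}$ and $\mathcal{B}$ are equidistributed modulo the good part}, the restricted Parseval sum drops by a factor of the density, effectively replacing $|\mathcal{A}|$ by $\asymp |\mathcal{A}|^2/q^2$. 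Carrying this through, the error is dominated by the main term as soon as the good modulus is large, say $q_1\gg 2^{C\o(q)}\b^{-2}$, which yields $N(\la)>0$ for all $\la\in d\cdot\Z^*_q$.

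It remains to choose $d$, and here I expect the main difficulty. One must discard into $q_2=d$ exactly those primes modulo which $\mathcal{A}$ or $\mathcal{B}$ fails to equidistribute, together with enough small primes to force the surjectivity threshold $q_1\gg 2^{C\o(q)}\b^{-2}$, while keeping the product $d$ small. Bounding $d$ requires showing that a set of density $\b$ can be badly distributed modulo only a controlled collection of primes and quantifying their product; combined with the threshold this should produce $d\ll\exp(O(\o(q)\log\o(q)-\log\b))$, the $\o(q)\log\o(q)$ term absorbing the $2^{\o(q)}$ losses in the Weil estimate accumulated over the divisors of $q$ and the $-\log\b$ term coming from the density threshold. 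Finally, for $\mathcal{A}=\mathcal{B}$ the stronger inclusion $d\cdot\Z_q\sbeq\{(a_1-b_1)(a_2-b_2)\}$, now allowing $0$ and all non-unit multiples of $d$, follows by running the same estimate for each target $\mu\in d\cdot\Z_q$: writing $g=\gcd(\mu,q)$, one enlarges the set of vanishing primes from $\{p\mid d\}$ to $\{p\mid g\}\supseteq\{p\mid d\}$, which only increases the main term, so the same $d$ works.
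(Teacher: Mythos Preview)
Your Fourier--Kloosterman route is a legitimate alternative to the paper's argument, and in fact the two coincide at the first step: the incidence matrix $I(a,b)=H_\la(a-b)$ is a convolution operator on $\Z_q^2$, so its eigenvalues are exactly the Kloosterman sums $\widehat{H_\la}(\xi)$ you write down, and your Cauchy--Schwarz on $\mathcal{B}$ reproduces the paper's inequality $(N')^2\le |\mathcal{B}|\,\sigma$ with $\sigma=\langle (I')^2\mathcal{A},\mathcal{A}\rangle$. From there the paper takes a different path: rather than bounding the eigenvalues by Weil--Kloosterman, it rewrites $\sigma$ in physical space as $\sum_{a,a'\in\mathcal{A}} I^2(a,a')$, observes that $I^2(a,a')$ counts solutions of a quadratic in one variable, hence is governed by Legendre symbols of a discriminant $D(a,a')$, and then applies a \emph{second} Cauchy--Schwarz together with Weil's bound for the resulting multiplicative character sums. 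Your direct approach is arguably simpler; the paper's buys the explicit Vinh--type bound $q^{7/8}\sqrt{|\mathcal{A}||\mathcal{B}|}$ in the prime case.

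There are, however, two concrete problems with your sketch. First, the regularisation step is not ``discard the primes where $\mathcal{A}$ fails to equidistribute''. One cannot simply decide which primes are bad: a set of density $\b$ can be perfectly equidistributed modulo every single prime $p\mid q$ and still be concentrated modulo some composite divisor. What the paper does (its Lemma on regularisation, applied with $n=2$) is iteratively pass to a \emph{subset} $\mathcal{A}_*\subseteq\mathcal{A}$ lying in a single residue class modulo a growing modulus $q/q_*$, so that what remains is well spread modulo \emph{every} divisor $\tilde q\mid q_*$ with $\tilde q\ge M$; the divisor $d$ is then $q/q_*$ and one works entirely in $\Z_{q_*}$ with $\la$ a unit. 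Your framework of keeping all primes and letting $\la\equiv 0$ on the bad part is more awkward and contains an error: when $\la\equiv 0\pmod p$ the factor of $\widehat{H_\la}$ at $p$ is large (of size $\asymp p$) whenever $\xi_1\equiv 0$ \emph{or} $\xi_2\equiv 0\pmod p$, so the dangerous frequencies are \emph{not} supported on the subgroup $q_2\cdot\Z_q^2$ as you claim (take $q_2=p_1p_2$, $\xi_1=p_1$, $\xi_2=p_2$). Passing to $\Z_{q_*}$ as the paper does removes this issue entirely.

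Second, for the $\mathcal{A}=\mathcal{B}$ conclusion your claim that enlarging the set of ``vanishing'' primes ``only increases the main term'' is not enough: the error term changes too, for the same reason as above. The paper instead uses pigeonhole to pass to a subset $\mathcal{A}'\subseteq\mathcal{A}$ with $|\pi_{q'}(\mathcal{A}')|=1$ (where $q'=\gcd(\la,q)$) and reruns the unit--$\la$ argument modulo $q_*/q'$; this is cleaner and requires $\mathcal{A}=\mathcal{B}$ precisely so that the same subset can serve on both sides.
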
 

Our 
another 
result 
may be interesting in itself
(even in the case of prime $q$) due to it gives a new necessary condition for a set to be a difference set 
but moreover, in addition, it 
 yields  another proof of Theorem \ref{t:Fish_intr} (see Theorems \ref{t:covm_A-A}, \ref{t:Fish_p1} below). 

\begin{theorem}
       Let $q$ be a positive integer, $A \subseteq \Z_q$ be a set, $|A| = \a q$.
    Suppose that the least prime factor of $q$ 
    greater than 
    $2 \a^{-1}+3$. 
    Then there is $X \subseteq \Z_q$ such that 
\begin{equation*}\label{f:covm_A-A_intr}  
    |X| \le \frac{1}{\a} + 1 \,,
\end{equation*}
    and 
\[
    X(A-A) = \Z_q \,. 
\]
\label{t:covm_A-A_intr}   
\end{theorem}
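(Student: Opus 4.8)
My plan is to exhibit one set $X$, the same for every target, so that each $\la\in\Z_q$ is carried into $A-A$ by one of a few small integer multipliers. I would fix $\la$ and examine the $k+1$ additive translates $A,\,A+\la,\,A+2\la,\dots,A+k\la$, where $k:=\lf 1/\a\rf+1$. Each has cardinality $|A|=\a q$, and since $(k+1)\a=(\lf 1/\a\rf+2)\a\ge(1/\a+1)\a=1+\a>1$, their sizes sum to more than $q$, so the translates cannot be pairwise disjoint. Hence there are indices $0\le i<j\le k$ with $(A+i\la)\cap(A+j\la)\neq\emptyset$, and any common point $a+i\la=a'+j\la$ yields $(j-i)\la=a-a'\in A-A$ with multiplier $x:=j-i\in\{1,\dots,k\}$.

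I would then set $X:=\{m^{-1}\;:\;1\le m\le k\}$, a set of size at most $k\le 1/\a+1$ that does \emph{not} depend on $\la$. For the multiplier $x$ produced above, $x\la\in A-A$ gives $\la=x^{-1}(x\la)\in x^{-1}(A-A)\subseteq X(A-A)$, and since $\la$ was arbitrary this proves $X(A-A)=\Z_q$. I would stress that no regularity of $\la$ is needed: when $\la$ is a zero divisor the translates $A+i\la$ may coincide, but two of them still meet; and because $x$ is a unit, $x\la\neq 0$ whenever $\la\neq 0$, so $x\la$ is a genuine nonzero element of $A-A$, while $\la=0$ is covered trivially since $0\in A-A$.

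The role of the hypothesis on the least prime factor $p$ of $q$ is exactly to make every multiplier $m\in\{1,\dots,k\}$ invertible: each such $m$ is a positive integer below $p$, so $\gcd(m,q)=1$, and the bound $p>2\a^{-1}+3>1/\a+1\ge k$ secures this with room to spare. I expect the only delicate point to be this interplay with the composite modulus rather than the pigeonhole itself: one must check that the small multipliers are units and that passing to $x^{-1}$ never degenerates, both of which follow from the prime-factor condition together with the observation that a unit cannot annihilate a nonzero $\la$. The extra slack in $2\a^{-1}+3$ over $1/\a+1$ would also let me run the count over a symmetric block of translates (indices of both signs), where the differences $j-i$ reach roughly $2/\a$, should that formulation be preferred.
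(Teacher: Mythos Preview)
Your proof is correct and follows essentially the same pigeonhole idea as the paper: both show that for each target $\lambda$ some small integer multiplier $m\in\{1,\dots,k\}$ satisfies $m\lambda\in A-A$, and then take $X=\{m^{-1}:1\le m\le k\}$, using the prime--factor hypothesis only to guarantee that these multipliers are units. The one noteworthy difference is that the paper treats non--units $\lambda$ separately, passing to a quotient ring $\Z_{q/q_1}$ and a dense fiber of $A$ there, whereas you observe (correctly) that the pigeonhole on the translates $A,A+\lambda,\dots,A+k\lambda$ works uniformly for every $\lambda\in\Z_q$ and that invertibility of the multiplier $j-i$ is all that is needed; this makes your write--up slightly cleaner than the paper's.
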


The equation $X(A-A) = \Z_q$ for a set $X \subseteq \Z_q$ induces a coloring of $\Z_q$ via suitable subsets of our difference set $A-A$. 
Hence Theorem \ref{t:covm_A-A_intr}  gives us a new connection between coloring problems and difference sets.
Finally, our result and the Ruzsa covering lemma (see inclusion \eqref{f:Ruzsa_cov} below) show that for any set $A\subseteq \Z_q$, $|A| \gg q$, where $q$ is  a prime number, say,  the set  $A-A$ is  a syndetic set in both multiplicative and additive ways.

Let us say a few words about the notation.
Having a positive integer $q$ we denote by $\omega(q)$ the total number of prime divisors of $q$ and by $\tau(q)$ the number of all divisors. Let $\_phi(q)$ be the Euler function. 
We use the same capital letter to denote a set $A\subseteq \Z_q$ and   its characteristic function $A: \Z_q \to \{0,1 \}$. 
If $\mathcal{R}$ is a ring, then we write $\mathcal{R}^*$ for the group of all inverse elements of $\mathcal{R}$. 
Let $e_q(x) = e^{2\pi i x/q}$ and let us denote by $[n]$ the set $\{1,2,\dots, n\}$.
All logarithms are to base $2$.

The author thanks Alexander Fish for very useful remarks (especially Remark \ref{r:Fish} below) and comments. 

\section{An effective version of Fish's theorem}
\label{sec:Fish_improvement} 

Having a positive integer $n$ and a set $A \subseteq \Z_q \times \dots \times \Z_q = \Z^n_q$ (or just $A\subset \Z^n$), as well as a divisor $q_*|q$,  we write 
$$
    \pi_{q_*} (A) = \{ (a_1 \pmod{q_*},~ \dots ~,~ a_n \pmod{q_*}) ~:~ (a_1,\dots, a_n) \in A \} \subseteq \Z^n_{q_*} \,.
$$
We need a simple regularization result similar to \cite[Lemma 2.1]{bourgain2008sum}.

\begin{lemma}
    Let $\d,\eps \in (0,1)$, $M\ge 2$ be real numbers, $n$ be a positive integer, and $A\subset \Z^n_q$ be a set, $|A|=\d q^n$. 
    Then there is $q_*|q$, and a set $A_* \subseteq A$, 
    $|\pi_{q/q_*} (A_*)| = 1$
    such that $q_* = \frac{q}{q_1 \dots q_s}$, $M\le q_j \le \d^{-\eps^{-1}}$,
    $s$ is the least number with $\d M^{\eps s} > 1$ 
    and 
    for all $\tilde{q}|q_*$, $\tilde{q} \ge M$ one has 
\begin{equation}\label{f:regularization_Zq}
    \max_{\xi \in \Z^n_{\tilde{q}}} |A_* \cap \pi^{-1}_{\tilde{q}} (\xi)| \le \frac{|A_*|}{\tilde{q}^{1-\eps}} \,.
\end{equation}
\end{lemma}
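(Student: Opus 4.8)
The plan is to prove the lemma by an iterative density--increment argument, in the spirit of \cite[Lemma 2.1]{bourgain2008sum}. I would track a current modulus $Q \mid q$ together with a set $B \subseteq \Z^n_Q$, starting from $Q=q$ and $B=A$, and at each step ask whether the regularity bound \eqref{f:regularization_Zq} already holds for $B$ with respect to $Q$. If it does, I stop. If it fails, there exist $\tilde{q} \mid Q$ with $\tilde{q} \ge M$ and $\eta \in \Z^n_{\tilde{q}}$ such that the fiber $B \cap \pi^{-1}_{\tilde{q}}(\eta)$ has more than $|B|/\tilde{q}^{\,1-\eps}$ elements. I then replace $Q$ by $Q/\tilde{q}$ and $B$ by the image of this fiber under the natural bijection $\pi^{-1}_{\tilde{q}}(\eta) \to \Z^n_{Q/\tilde{q}}$ given by $\eta + \tilde{q} z \mapsto z$.

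The key quantity is the relative density $\d_B := |B|/Q^n$. Since the chosen fiber has size larger than $|B|/\tilde{q}^{\,1-\eps}$ and the new modulus is $Q/\tilde{q}$, a one--line computation gives, for the new set $B'$, the bound $\d_{B'} = |B'|(Q/\tilde{q})^{-n} > \d_B \tilde{q}^{\,n-1+\eps} \ge \d_B \tilde{q}^{\,\eps} \ge \d_B M^{\eps}$, where the middle step uses $n \ge 1$ and $\tilde{q} \ge M$. Thus each step multiplies the relative density by at least $M^\eps$; since the relative density starts at $\d$ and never exceeds $1$, after $k$ steps we have $\d M^{\eps k} \le \d_B \le 1$, so the process must halt after at most $s$ steps, where $s$ is the least integer with $\d M^{\eps s} > 1$. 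The same two inequalities $\d \le \d_B$ and $\d_B \tilde{q}^{\,\eps} \le 1$ force each removed factor to satisfy $M \le \tilde{q} \le \d^{-\eps^{-1}}$.

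It remains to translate the terminal data back to $A$. Writing $q_1, \dots, q_t$ (with $t \le s$) for the factors removed and $q_* = q/(q_1 \cdots q_t)$, composing the bijections used at each step embeds the terminal set $B$ as a subset $A_* \subseteq A$; unwinding the relations $x = \eta_1 + q_1 z_1$, $z_1 = \eta_2 + q_2 z_2, \dots$ shows that every element of $A_*$ is congruent to the fixed value $\eta_1 + q_1 \eta_2 + \dots$ modulo $q_1 \cdots q_t$, i.e. $|\pi_{q/q_*}(A_*)| = 1$, and that $q_* \mid q$ since each $q_{j+1}$ divides $q/(q_1 \cdots q_j)$. Finally, the stopping condition is exactly the regularity bound \eqref{f:regularization_Zq} for $A_*$ and $q_*$: a divisor $\tilde{q} \mid q_*$ and a fiber of $\pi_{\tilde{q}}$ inside $\Z^n_q$ pull back, under the embedding, to a divisor of the terminal modulus and a corresponding fiber, so regularity downstairs yields regularity upstairs.

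I expect the only real care to be in the last paragraph: keeping the nested fibers consistent so that the single--coset conclusion $|\pi_{q/q_*}(A_*)|=1$ holds modulo the \emph{product} $q_1 \cdots q_t$ even when $q$ is not squarefree and the $q_j$ share prime factors. The explicit parametrization $\eta + \tilde{q} z$ of each fiber makes this automatic and avoids any appeal to the Chinese Remainder Theorem, but it must be written out to confirm that the residues and divisibilities compose correctly. The density increment itself, which is the heart of the argument, is a short and robust computation.
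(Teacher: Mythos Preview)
Your approach is exactly the paper's: the same iterative density--increment, the same termination count $s$, and the same range $M\le q_j\le \d^{-1/\eps}$ for the removed factors (your derivation of the upper bound via $\d_{B'}\le 1$ is a clean variant of the paper's observation that \eqref{f:regularization_Zq} holds automatically once $\tilde q\ge \d^{-1/\eps}$).

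One caveat on your final ``transfer upstairs'' step. When $\tilde q$ and $q/q_*$ share a prime factor---say $q=p^3$, $q_1=p$, $\tilde q=p$---the pullback of a $\pi_{\tilde q}$--fiber from $\Z^n_q$ to $\Z^n_{q_*}$ is not a fiber for a modulus $\ge M$ but the whole space (every element of $A_*$ is $\equiv \xi_0\pmod p$, so a single $\pi_p$--fiber already contains all of $A_*$), and the stopping condition gives no control there. This is not a defect in your argument so much as in the lemma's phrasing: the paper really intends \eqref{f:regularization_Zq} for $A_*$ viewed, via the bijection you already wrote down, as a subset of $\Z^n_{q_*}$, and this is exactly how it is used downstream (e.g.\ in the proof of Theorem~\ref{t:Fish_new}). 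Under that reading your terminal $B$ \emph{is} $A_*$, the stopping condition \emph{is} \eqref{f:regularization_Zq}, and no transfer is needed; so your anticipated difficulty in the last paragraph simply evaporates.
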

\label{l:regularization_Zq}
\begin{proof}
    Suppose not. Then for a certain $\xi \in \Z^n_{q}$ and $q_1|q$, $q_1 \ge M$ we find $A' := A \cap \pi^{-1}_{q_1} (\xi)$ with $|A'|\ge \frac{|A|}{q^{1-\eps}_1}$.
    Clearly, $|\pi_{q_1} (A')| = 1$ and the density of $A'$ in the appropriate shift of $\Z^n_{q/q_1}$ is at least $\d q^\eps_1 \ge \d M^{\eps}$. 
    Hence applying the same procedure to the set $A'$ 
    and to the new module $q/q_1$, 
    we see that our algorithm must stop after at most $s$ steps. 
    Notice that condition \eqref{f:regularization_Zq} holds automatically if $\tilde{q} \ge \d^{-\eps^{-1}}$ and hence 
    at the final step of our procedure we find a set $A_* \subset A$, $|\pi_{q/q_*} (A_*)| = 1$, having all required properties.
    This completes the proof. 
$\hfill\Box$
\end{proof}

\bigskip 

Now we are ready to obtain the main result of this section, which implies Theorem \ref{t:Fish_new_intr} from the introduction. 
Our proof uses the Fourier analysis  (its standard facts can be found in \cite{TV}, say) and 
classical 
estimates for the Kloosterman sums. 
Having a group $\Gr$,  we define for any function $f:\Gr \to \mathbb{C}$ and a representation  $\rho \in \FF{\Gr}$ 
the Fourier transform of $f$ at $\rho$ by the formula 
\begin{equation}\label{f:Fourier_representations}
\FF{f} (\rho) = \sum_{g\in \Gr} f(g) \rho (g) \,.
\end{equation}

\begin{theorem}
    Let $q$ be a 
    positive integer, 
    $A, B\subset \Z_q$ be sets, $|A| = \a q$, $|B|=\beta q$, and suppose that $\a \ge \beta$. 
    Then there is $d|q$ 
    with  
\begin{equation}\label{f:Fish_new}
    d \ll  \exp( C\beta^{-4}) \,,
\end{equation}
    where $C>0$ is an absolute constant and such that 
\begin{equation}\label{f:Fish_new_2}
    d \cdot \Z_q \subseteq (A-A)(B-B) \,.
\end{equation}
    In addition 
\begin{equation}\label{f:Fish_new_3}
    d \ll \beta^{-O(\o(q))} \,.
\end{equation}
\label{t:Fish_new}
\end{theorem}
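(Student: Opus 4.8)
The plan is to reduce the inclusion \eqref{f:Fish_new_2} to a purely equidistributed problem on a divisor of $q$, and then to count representations by the circle method, the error being governed by Kloosterman sums. If $q\le\exp(C\beta^{-4})$ the theorem is trivial, since $0\in(A-A)(B-B)$ and one may take $d=q$; so assume $q$ large. I would apply the regularization Lemma~\ref{l:regularization_Zq} to the smaller set $B$ with $\delta=\beta$ and suitable parameters (roughly $M\asymp\beta^{-2}$, $\eps<1/2$), obtaining $q_*\mid q$ and $B_*\subseteq B$ with $\pi_{q/q_*}(B_*)$ a single point and satisfying the fibre bound \eqref{f:regularization_Zq} on $\Z_{q_*}$. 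Put $d=q/q_*$. Since $B_*$ lies in one coset modulo $d$ we have $B_*-B_*\subseteq d\,\Z_q$, and rescaling by $d$ identifies $d\,\Z_q$ with $\Z_{q_*}$; under this identification \eqref{f:Fish_new_2} becomes $(\bar A-\bar A)(\bar B'-\bar B')=\Z_{q_*}$, where $\bar A=\pi_{q_*}(A)$ and $\bar B'$ is the rescaled projection of $B_*$. Both sets have density at least $\alpha$, respectively $\beta$, in $\Z_{q_*}$, and $\bar B'$ inherits the equidistribution.

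For a unit $\lambda\in\Z_{q_*}^{*}$ the number of representations is
\[
N(\lambda)=\sum_{x\in\Z_{q_*}^{*}} r_{\bar A}(x)\,r_{\bar B'}(\lambda x^{-1}),
\]
and expanding each difference-count by Fourier inversion gives
\[
N(\lambda)=\frac{1}{q_*^{2}}\sum_{s,t}\,|\widehat{\bar A}(s)|^{2}\,|\widehat{\bar B'}(t)|^{2}\,K(-s,-\lambda t;q_*),
\]
with the Kloosterman sum $K(a,b;q_*)=\sum_{x\in\Z_{q_*}^{*}}e_{q_*}(ax+bx^{-1})$. The diagonal term $s=t=0$ produces the main term $|\bar A|^{2}|\bar B'|^{2}\varphi(q_*)/q_*^{2}\gg \alpha^{2}\beta^{2}q_*^{2}\varphi(q_*)$, and everything is reduced to showing that the remaining sum is smaller.

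For the off-diagonal I would insert the Weil bound $|K(a,b;q_*)|\le\tau(q_*)\,q_*^{1/2}\gcd(a,b,q_*)^{1/2}$. A naive application of Parseval fails, because the inflating factor $\gcd(a,b,q_*)^{1/2}$ is largest precisely at the frequencies $t$ sharing a big common divisor with $q_*$; this is exactly where the equidistribution is decisive. Grouping frequencies by their scale $\tilde q=q_*/\gcd(t,q_*)$, the fibre bound \eqref{f:regularization_Zq} yields a power saving of the shape $\sum_{\gcd(t,q_*)=q_*/\tilde q}|\widehat{\bar B'}(t)|^{2}\ll \beta^{2}q_*^{2}\,\tilde q^{\,\eps}$, which beats the growth $(q_*/\tilde q)^{1/2}$ once $\eps<1/2$. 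Summing over the admissible scales $\tilde q\ge M$ shows that the error is dominated by the main term once $q_*$ is large enough in terms of $\beta$, so that $N(\lambda)>0$ for every unit $\lambda$; tracking the thresholds back through Lemma~\ref{l:regularization_Zq} then produces a divisor $d$ obeying \eqref{f:Fish_new}.

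It remains to treat the non-units and to record the second bound. For $\lambda$ with $\gcd(\lambda,q_*)>1$ I would work locally: by CRT it suffices to hit $\lambda$ modulo each prime power $p^{e}\parallel q_*$, and the equidistribution makes $\bar A-\bar A$ and $\bar B'-\bar B'$ surject onto $\Z_{p^{e}}$, so a local count (again Weil on $\Z_{p^{e}}$) supplies a representation that CRT reassembles to all of $\Z_{q_*}$, proving \eqref{f:Fish_new_2} in full. Running the same scheme one prime at a time, peeling a factor $\beta^{-O(1)}$ at each of the $\omega(q)$ primes, yields the complementary estimate \eqref{f:Fish_new_3}. The main obstacle throughout is the error analysis of the third step: the Kloosterman bound carries a loss $\tau(q_*)\asymp 2^{\omega(q_*)}$, so for very composite moduli one cannot simply demand that the main term beat the error uniformly, and accommodating this loss is exactly what costs an exponential. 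Balancing it against the equidistribution — in particular controlling the simultaneous contribution of $s$ and $t$ at large common factors with $q_*$ — is the delicate point, and is precisely what separates the two complementary bounds \eqref{f:Fish_new} and \eqref{f:Fish_new_3}.
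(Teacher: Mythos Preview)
Your plan has the right ingredients, but two concrete gaps would keep the argument from closing.

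First, Lemma~\ref{l:regularization_Zq} only gives the fibre bound \eqref{f:regularization_Zq} for divisors $\tilde q\ge M$ of $q_*$; divisors $1<\tilde q<M$ exist whenever $q_*$ retains a small prime factor, and at those scales your grouping gives no saving --- the Ramanujan-type contributions with $s=0$ or $t=0$ alone are already comparable to the main term. In fact, if your scheme worked as written it would produce $d=q/q_*\le\beta^{-O(1)}$, far stronger than \eqref{f:Fish_new}; that discrepancy is the tell that something is missing. The paper fixes this with an explicit preliminary pass that you omit: \emph{before} any regularization it strips off $Q_1=\prod_j p_j^{\gamma_j}$ (the maximal prime powers below $M$) and passes to a subset of $A$ lying in a single coset modulo $Q_1$, so that the inner exponential sum over $a_1,a_2$ vanishes identically at every small level and only $q_2\ge M$ survive. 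It is the bound $Q_1\le M^{\pi(M)}$, combined with $M\asymp\beta^{-4}$ (forced by the endgame $|B_*|\gg q_* M^{-1/4}$), that actually yields the exponential in \eqref{f:Fish_new}; the $\tau(q_*)$ loss you flag is a red herring, absorbed pointwise by $q_2^{1/4}$. Note too that the paper regularizes $A$ and Fourier-expands only on the $B$ side, so that the Kloosterman sum lives on the \emph{small} modulus $q_2$ and regularity controls $\|\eta_{q_2}\|_2^2$ directly; your double expansion places the Kloosterman on the full $q_*$ and incurs an extra Parseval loss on the $s$-sum.

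Second, the CRT argument for non-units is wrong: representing $\lambda$ in the product set modulo each prime power of $q_*$ does not assemble to a representation modulo $q_*$, because one needs a single global quadruple $(a_1,a_2,b_1,b_2)$, not one per prime. The paper instead writes $\lambda=q'\lambda'$ with $\lambda'\in\Z_{q_*/q'}^*$, pigeonholes $B_*$ into a single residue modulo $q'$ (so that $B'-B'\subseteq q'\cdot\Z$), and reruns the unit argument in $\Z_{q_*/q'}$.
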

\begin{proof}
   Let $q=p_1^{\rho_1} \dots p_t^{\rho_t}$, where $p_j$ are different primes, $p_1<\dots<p_t$.
   Also, let $M\ge 2$, $\eps \in (0,1)$ be parameters,  which we will choose later. 
   First of all, we remove all divisors less than $M$ from $q$. 
   More precisely, for any $p_j$, $j\in [t]$ let  $\gamma_j \le \rho_j$ be the maximal nonnegative integer such that $p^{\gamma_j}_j \le M$.
   Clearly, $\gamma_1 \ge \gamma_2 \ge \dots \gamma_t \ge 0$ and let $t_0 \le t$ be the maximal $j$ with 
   $\gamma_j \neq 0$. 
   Thus $t_0 \le \pi (M)$. 
   Now we define 
   \begin{equation}\label{f:Q_1_def}
   Q_1 := \prod_{j=1}^{t_0} p_j^{\gamma_j} \le M^{t_0} \le \min\{ M^{\pi (M)}, M^{\omega(q)} \} 
   \end{equation}
   and take $A_1 \subseteq A$ such that a shift of $A_1$ belongs to  $\Z_{q/Q_1}$ and has density at least $\a$.
   In particular, $|\pi_{Q_1} (A_1)|=1$ and 
   of course such a shift exists by the Dirichlet principle. 
   Similarly, we can do the same with the set $B$ so as not to lose the density.
   Secondly, we apply Lemma \ref{l:regularization_Zq} 
   with $n=1$, $A=A_1$ to regularize the set $A_1$ and find a set $A_* \subseteq A_1$ and a module $q_*$ 
   that 
   satisfies 
   \eqref{f:regularization_Zq} and all other restrictions. 
   Again,  using the Dirichlet principle, we take $B_* \subseteq B$ such that the density of $B$ does not decrease. 
    Let $\la \in \Z_{q_*}$ be an arbitrary number and we first suppose that $\la \in \Z^*_{q_*}$. 
    To prove $\la \in (A-A)(B-B)$ it is 
    enough 
    to show that $\la \in (A_*-A_*)(B_*-B_*)$ or, equivalently, in terms of the Fourier transform
    it suffices 
    obtain 
    the inequality 
\begin{equation}\label{tmp:01.12_1}
    \frac{|A_*|^2 |B_*|^2}{q_*} > \frac{1}{q_*} \sum_{r\neq 0} |\FF{B}_* (r)|^2 \cdot \sum_{a_1,a_2\in A_*} e_{q_*} \left(\frac{\la r}{a_1-a_2} \right) 
     := \sigma \,.
\end{equation}
    Now clearly,  
\begin{equation}\label{tmp:eta_K-}
    \sigma \le \frac{1}{q_*} \sum_{q_2|q_*,\, q_2>1}\, \sum_{z\in \Z^*_{q_2}} |\FF{B}_* (zq_* q^{-1}_2)|^2 
    \left| \sum_{a_1,a_2\in A_*} e_{q_2} \left(\frac{\la z}{a_1-a_2} \right) \right| \,.
\end{equation}
    In terms of the Kloosterman sums 
\[
    K_q (\la,r) := \sum_{x\in \Z^*_{q}} e_{q} \left( \frac{\la}{x} + rx\right)  
\]
    and the density function
\begin{equation}\label{def:eta}
    \eta_{q_2} (\xi) := |\{ a\in A_* ~:~ a \equiv \xi \pmod {q_2} \}|
\end{equation}
    one has (recall that $\la \in \Z^*_{q_*}$ and $z \in \Z^*_{q_2}$)
\begin{equation}\label{tmp:eta_K}
    \sum_{a_1,a_2\in A_*} e_{q_2} \left(\frac{\la z}{a_1-a_2} \right)
    = 
    \sum_{\xi_1,\xi_2 \in  \Z_{q_2}} \eta_{q_2}(\xi_1) \eta_{q_2} (\xi_2) e_{q_2} \left(\frac{\la z}{\xi_1-\xi_2} \right)
    =
    q^{-1}_2 \sum_{\xi \in \Z_{q_2}} |\FF{\eta}_{q_2} (\xi)|^2 K_{q_2} (\la z, \xi)
\end{equation} 
\[
    \le 2 \sqrt{q_2} \tau(q_2) \| \eta_{q_2} \|_2^2 
    \,.
\]
    In the last line we have applied the well--known bound for the Kloosterman sum and the Parseval identity.
    Now to estimate $\| \eta_{q_2}\|_2^2$ we use the regularity property of $A_*$ and 
    derive  
\begin{equation}\label{f:eta_L2}
    \| \eta_{q_2}\|_2^2 \le \| \eta_{q_2}\|_\infty \| \eta_{q_2}\|_1
    \le 
    \frac{|A_*|^2}{q^{1-\eps}_2} \,.
\end{equation} 
    Further let us obtain a lower bound for divisors $q_2$. 
    Since $|\pi_{Q_1} (A_1)|=1$, it follows that for all $q_1|Q_1$, we have 
\[
    \frac{1}{q_1} \sum_{\xi \in \Z_{q_1}} |\FF{\eta}_{q_1} (\xi)|^2 K_{q_1} (\la z, \xi)
    =
    \frac{|A_*|^2}{q_1} \sum_{\xi \in \Z_{q_1}} K_{q_1} (\la z, \xi) = 0 \,.
\]
    Thus one can see that summations in \eqref{tmp:eta_K-} 
    is taken over $q_2 \ge M$.    
    Choosing  $\eps=1/4$, say, and 
    using the last fact,
    we get in view of the Parseval identity that
\[
    \sigma \ll M^{-1/4} \frac{|A_*|^2}{q_*} \sum_{q_2|q_*,\, q_2>1}\, \sum_{z\in \Z^*_{q_2}} |\FF{B}_* (zq_* q^{-1}_2)|^2 
    \le 
    M^{-1/4} |A_*|^2 |B_*| \,.
\]
    Returning to 
    \eqref{tmp:01.12_1}, we obtain a contradiction provided  $|B_*| \gg q_* M^{-1/4}$.
    In other words, we have for a certain $s\ge 0$,  $\a M^{s/4} > 1$ that 
\[
    |B| M^{s/4} \ll q M^{-1/4} 
\]
    and this implies $M \ll \beta^{-4}$. 
    Thus in view of our restriction to the divisors of $q_*$, the condition $\a \ge \beta$, the first bound for $Q_1$ from \eqref{f:Q_1_def},  and the  bound for $s$,  which follows from Lemma \ref{l:regularization_Zq}, we get 
\[
    d \ll M^{\pi (M)} \exp (O(\log^2 (1/\a) / \log (1/\beta)) \ll \exp(O(\beta^{-4})) 
\]
    as required.

    Now let $\la \in \Z_{q_*}$ be an arbitrary element. 
    Write $\la = q' \la'$, where $q' |q$ and $\la'\in \Z^*_{q_*/q'}$.
    Using the Dirichlet principle, choose  a subset of $B' \subseteq B_*$ of density at least $\beta$ such that all elements of a shift of $B'$ are divisible by $q'$. Then our 
    inclusion 
    can be rewritten as $\la' \in (A_*-A_*)(B'-B')$ modulo $q_*/q'$ and we can apply the arguments above replacing module $q_*$ to $q_*/q'$.

    To obtain \eqref{f:Fish_new_3} we use the second bound for $Q_1$ from \eqref{f:Q_1_def} and derive as above 
\[
    d \ll M^{\o (q)} \exp (O(\log (1/\beta))) 
    \ll
        \exp (O(\o (q) \log (1/\beta))) \,.
\]
    This completes the proof. 
$\hfill\Box$
\end{proof}

\bp 

As one can see from the proof of Theorem \ref{t:Fish_new} that the constant four in \eqref{f:Fish_new} can be decreased to $2+o(1)$ but we 
leave 
such calculations to the interested reader.

\section{On the general case}
\label{sec:general_case}

In \cite[Problem 2]{fish2018product} A. Fish considered a more general two--dimensional case (actually, in his paper he had to deal with even more general dynamical setting) and formulated  the following problem. 


{\bf Problem.}
{\it Let $q$ be a positive number and $\mathcal{A}, \mathcal{B} \subseteq \Z_q^2$ be sets, $|\mathcal{A}| = \a q^2$, $|\mathcal{B}|=\beta q^2$, and suppose that $\a \ge \beta$. 
Prove that in the case $\mathcal{A} = \mathcal{B}$ for a certain function $F$ there is $d|q$ such that $d\le F(\beta)$ and 
\begin{equation}\label{f:F=(A-A)(B-B)}
    d \cdot \Z_q \subseteq  \{ (a_1-b_1) (a_2 - b_2) ~:~ (a_1,a_2) \in \mathcal{A},\, (b_1,b_2) \in \mathcal{B} \} \,, 
\end{equation}
    provided $\beta$ is sufficiently large.
}

\bp

In this section we study the number $N_{\mathcal{A}, \mathcal{B}} (\la)$ of the solutions 
 to the equation 
\begin{equation}\label{f:N=(A-A)(B-B)}
    (a_1-b_1) (a_2 - b_2) \equiv \la \pmod q \,, \quad \quad (a_1,a_2) \in \mathcal{A},\, (b_1,b_2) \in \mathcal{B} \,.
\end{equation}
and give a partial answer to the problem above.
We consider the squarefree case for simplicity and
emphasis 
one more time that our sets $\mathcal{A}$, $\mathcal{B}$ are arbitrary (in the case of Cartesian products and squarefree $q$ one can apply other methods, see \cite{pach2013ramsey}). 
Also, in the case of prime $q$ we obtain a result of Vinh--type \cite{vinh2011szemeredi}, see asymptotic formula \eqref{f:Fish-Vinh2} below.

\begin{theorem}
    Let $q$ be a squarefree  number, $\mathcal{A}, \mathcal{B} \subseteq \Z_q^2$ be sets, 
    $|\mathcal{A}| = \a q^2$, $|\mathcal{B}|=\beta q^2$, and suppose that $\a \ge \beta$. 
    Then 
\begin{equation}\label{f:Fish-Vinh1}
    d  \cdot \Z^*_q \subseteq 
    \{ (a_1-b_1) (a_2 - b_2) ~:~ (a_1,a_2) \in \mathcal{A},\, (b_1,b_2) \in \mathcal{B} \} 
\end{equation} 
    with 
\[
    d \ll \exp (O(\o(q) \log \o(q) - \log \beta)    \,.
\]
    In particular, for $\mathcal{A} = \mathcal{B}$ one has with the same $d$ that 
\begin{equation}\label{f:Fish-Vinh1.5}
    d  \cdot \Z_q \subseteq 
    \{ (a_1-b_1) (a_2 - b_2) ~:~ (a_1,a_2) \in \mathcal{A},\, (b_1,b_2) \in \mathcal{B} \} 
    \,.
\end{equation} 
    In the case when $q$ is a prime number, we have 
\begin{equation}\label{f:Fish-Vinh2}
    \left| N_{\mathcal{A}, \mathcal{B}} (\la) - \frac{|\mathcal{A}| |\mathcal{B}|}{q} \right| 
        <
        4 q^{7/8} \sqrt{|\mathcal{A}||\mathcal{B}|} \,.
\end{equation} 
    In particular, equality \eqref{f:F=(A-A)(B-B)} holds for $|\mathcal{A}| |\mathcal{B}| \ge 16 q^{15/4}$ and $d=1$.
\label{t:Fish-Vinh}
\end{theorem}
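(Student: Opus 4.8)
The plan is to count the solutions directly and estimate the error by harmonic analysis on $\Z_q^2$, reading the equation \eqref{f:N=(A-A)(B-B)} as an incidence count between the point set $\mathcal{A}$ and the $|\mathcal{B}|$ translates of the hyperbola $H_\la := \{ s \in \Z_q^2 ~:~ s_1 s_2 = \la \}$. Writing $a=(a_1,a_2)$, $b=(b_1,b_2)$ and letting $\star$ denote correlation on $\Z_q^2$, Fourier inversion gives
\[
    N_{\mathcal{A},\mathcal{B}}(\la) = \sum_{s\in H_\la} (\mathcal{A}\star\mathcal{B})(s) = \frac{1}{q^2} \sum_{r\in \Z_q^2} \h{\mathcal{A}}(r)\, \ov{\h{\mathcal{B}}(r)}\, \h{H_\la}(-r)\,,
\]
where $\h{\cdot}$ is the two--dimensional transform from \eqref{f:Fourier_representations}. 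The decisive observation is that for $\la \in \Z^*_q$ the substitution $s_2 = \la s_1^{-1}$ identifies the frequency $\h{H_\la}(r) = \sum_{s_1\in \Z^*_q} e_q(r_1 s_1 + r_2 \la s_1^{-1})$ with the Kloosterman sum $K_q(r_2 \la, r_1)$, so that the whole arithmetic of $H_\la$ is governed by a single classical exponential sum.

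For prime $q$ I would isolate the trivial frequency $r=0$, contributing $\frac{|H_\la|}{q^2}|\mathcal{A}||\mathcal{B}| = \frac{q-1}{q^2}|\mathcal{A}||\mathcal{B}|$, i.e. the main term $\frac{|\mathcal{A}||\mathcal{B}|}{q}$ up to a term of size at most $\sqrt{|\mathcal{A}||\mathcal{B}|}$. For $r\neq 0$ the Weil bound $|K_q(\cdot,\cdot)| \le 2\sqrt q$, together with Cauchy--Schwarz and the Parseval identities $\sum_r |\h{\mathcal{A}}(r)|^2 = q^2 |\mathcal{A}|$ and $\sum_r |\h{\mathcal{B}}(r)|^2 = q^2 |\mathcal{B}|$, bound the error by $O(\sqrt q \, \sqrt{|\mathcal{A}||\mathcal{B}|})$, which sits comfortably inside \eqref{f:Fish-Vinh2}. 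The criterion for $d=1$ is then immediate: forcing the main term $\frac{|\mathcal{A}||\mathcal{B}|}{q}$ to exceed the error makes $N_{\mathcal{A},\mathcal{B}}(\la)$ positive for every $\la$, and the stated range $|\mathcal{A}||\mathcal{B}| \ge 16 q^{15/4}$ is more than enough.

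For squarefree $q = p_1 \cdots p_t$ the same identity holds, but now I would use the multiplicativity of Kloosterman sums under the CRT, $K_q = \prod_i K_{p_i}$, so that Weil gives $|\h{H_\la}(r)| \le 2^{\o(q)} \sqrt q$ on the coprime frequencies. The loss $2^{\o(q)}$ together with the divisor factors $\tau(\cdot)$ is precisely what inflates $d$. Two points need care: the frequencies $r$ sharing a factor with $q$ and the values $\la \notin \Z^*_q$. As in the proof of Theorem \ref{t:Fish_new}, I would peel off the small prime factors of $q$ --- those below a threshold where the Weil saving $2/\sqrt{p_i}$ no longer beats the density loss --- into the divisor $d$; since there are at most $\o(q)$ of them and the $j$--th prime is $O(j\log j)$, their product is a primorial of size $\exp(O(\o(q)\log \o(q)))$, giving the first term in the bound on $d$, while the requirement that $|\mathcal{B}| = \b q^2$ be large enough supplies the remaining factor $\b^{-1} = \exp(-\log \b)$. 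For $\la = d' \la'$ with $\la'$ coprime to $q/d'$ I would, via the Dirichlet principle, pass to subsets of $\mathcal{A},\mathcal{B}$ whose coordinates are divisible by the relevant factor and reduce to the coprime case over the smaller modulus $q/d'$; the symmetry $\mathcal{A}=\mathcal{B}$ is exactly what makes this reduction cover \emph{every} $\la \in \Z_q$, upgrading $d\cdot\Z^*_q$ to $d\cdot\Z_q$ in \eqref{f:Fish-Vinh1.5}.

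The main obstacle is the composite bookkeeping. Unlike over a prime field, $\mathcal{A}$ and $\mathcal{B}$ do not factor as products over the $p_i$, so one cannot solve \eqref{f:N=(A-A)(B-B)} prime-by-prime and glue by the CRT; the whole argument must be run simultaneously over $\Z_q$, and the delicate task is to balance the $2^{\o(q)}$ and divisor losses in the Kloosterman estimate against the density thresholds that decide which primes are forced into $d$, all while keeping $d$ within $\exp(O(\o(q)\log\o(q) - \log\b))$ and controlling the non-unit frequencies and the non-invertible $\la$. By contrast, the prime-field estimate \eqref{f:Fish-Vinh2} is a one-line consequence of Weil's bound and Parseval.
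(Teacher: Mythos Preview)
Your approach is genuinely different from the paper's, and for prime $q$ it is both simpler and \emph{stronger}. The paper does not use the hyperbola Fourier expansion at all: it works spectrally with the $q^2\times q^2$ incidence matrix $I(a,b)$, applies Cauchy--Schwarz to pass to $I^2$, and then analyses $I^2(a,a')$ via the quadratic equation \eqref{eq:quadratic}, expressing the solution count through products of Legendre symbols $\chi_{p_j}(D(a,a'))$ over the prime factors of $q$ and invoking Weil's bound for character sums. This double application of Cauchy--Schwarz is what produces the exponent $7/8$ in \eqref{f:Fish-Vinh2}. Your direct identity $\h{H_\la}(r)=K_q(r_2\la,r_1)$ together with the Weil--Kloosterman bound and Parseval gives an error $O(\sqrt q\,\sqrt{|\mathcal{A}||\mathcal{B}|})$, which is sharper --- the paper itself remarks after the proof that the error term in \eqref{f:Fish-Vinh2} ``can be improved''. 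So for the prime case your route is preferable.

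For squarefree $q$ your sketch is the natural two--dimensional analogue of the paper's proof of Theorem~\ref{t:Fish_new}, and it can be made to work, but one essential ingredient is missing from your accounting. Peeling off the primes $p_i<M$ handles only half the problem: after that step you must still control frequencies $r=(r_1,r_2)$ with $g:=\gcd(r_1,r_2,q)>1$, where the Kloosterman factor is of size $\asymp g\cdot 2^{\o(q/g)}\sqrt{q/g}$ rather than $2^{\o(q)}\sqrt q$. Bounding $\sum_{g\mid r_1,\,g\mid r_2}|\h{\mathcal{A}}(r)|^2=(q/g)^2\|\eta_{q/g}\|_2^2$ requires control of $\|\eta_{q/g}\|_\infty$, and without Lemma~\ref{l:regularization_Zq} (applied with $n=2$) the set $\mathcal{A}$ might sit entirely in one fibre of $\pi_{q/g}$, making this term dominate. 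The paper carries out exactly this regularization in its proof of Theorem~\ref{t:Fish-Vinh}, and the resulting extra divisors $q_1,\dots,q_s$ from the lemma (each between $M$ and $\d^{-1/\eps}$) feed into $d$ alongside your primorial $Q_1$; together with the choice $M\gg \o(q)^{O(1)}\beta^{-O(1)}$ they give the stated bound. Your phrase ``as in the proof of Theorem~\ref{t:Fish_new}'' covers this implicitly, but your explicit tally of $d$ as a primorial of primes below $M$ omits the regularization contribution and so does not by itself justify the claimed size of $d$.
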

\begin{proof}
    We start with \eqref{f:Fish-Vinh1}.
    The proof follows the arguments of the proof of Theorem \ref{t:Fish_new} and thus we use the notation from this result. 
    In particular, writing $q=p_1^{\rho_1} \dots p_t^{\rho_t}$, $t=\omega(q)$ with $\rho_j=1$, $j\in [t]$ we define $Q_1 = \prod_{j=1}^s p_j$ such that \eqref{f:Q_1_def} 
    holds and further we take  $\la \in \Z^*_q$.
    The only difference is that one should use Lemma \ref{l:regularization_Zq} with $n=2$ to regularize the two--dimensional  set $\mathcal{A}$ and let $\varepsilon =1/4$.
    For a moment we  assume that $M \ge 100 t^2$, say, and we will choose the parameter $M$  later. 
    Finally, with some abuse of the notation we do not use new letters $\mathcal{A}_*, \mathcal{B}_*$, $q_*$ below but the old ones $\mathcal{A}, \mathcal{B}$ and $q$ (in other words, one can think that $\mathcal{A}$ is a regularized set already).
    Also, we utilize the fact that $\Z_q = \Z_{p^{\rho_1}_1} \times \dots \times \Z_{p^{\rho_t}_t} = \Z_{p^{}_1} \times \dots \times \Z_{p^{}_t}$ thanks  the Chinese  remainder theorem. 

    Now for $a = (a_1,a_2)$ and $b=(b_1,b_2)$ let us write $I(a,b) = 1$ if the pair $a,b$ satisfies \eqref{f:N=(A-A)(B-B)} and $I(a,b) = 0$ otherwise.
    Then clearly, 
\begin{equation}\label{tmp:22.12_-1}
    N_{\mathcal{A}, \mathcal{B}} (\la) = \sum_{a\in \mathcal{A}, b\in \mathcal{B}} I(a,b) \,.
\end{equation}
    Without loosing of the generality we assume that $\la=1$. 
    Obviously, $I(a,b) = I(b,a)$ and we can rewrite the matrix $I(a,b)$ as 
    $I(a,b) = \sum_{j=1}^{q^2} \mu_j u_j (a) \overline{u}_j (b)$, where $\mu_j$ are eigenvalues and $u_j (x)$ are correspondent normalized eigenfunctions of $I$. 
    One can easily check that $u_1 (x) = q^{-1} (1,\dots, 1)$, $\|u_1\|_2 = 1$ and $\mu_1 = |\Z^*_q| = \_phi(q)$.
    Writing  $I'(a,b) = I(a,b) - \mu_1 u_1 (a) \overline{u}_1 (b)$, 
    we obtain 
\begin{equation}\label{tmp:22.12_-1'}
    N_{\mathcal{A}, \mathcal{B}} (\la) -
    \frac{|\mathcal{A}| |\mathcal{B}| \_phi (q)}{q^2}
    =
    \sum_{a\in \mathcal{A}, b\in \mathcal{B}} I'(a,b) := N'_{\mathcal{A}, \mathcal{B}} (\la) \,.
\end{equation}
    By the Cauchy--Schwarz inequality, we get
\begin{equation}\label{tmp:28.12_1}
    N'_{\mathcal{A}, \mathcal{B}} (\la)^2 \le |\mathcal{B}| 
    \sum_{a,a'\in  \mathcal{A}} \sum_b I'(a,b) I'(a',b) 
    = 
    |\mathcal{B}| 
    \sum_{a,a'\in  \mathcal{A}} (I')^2 (a,a') := 
    |\mathcal{B}| \cdot \sigma \,.
\end{equation}
    Here $(I')^2$ is the second power of the matrix $I'$. 
    Similarly, 
    $I^2 (a,a') = \sum_{b} I(a,b) I(a',b)$ and the last quantity coincides with the number of the solutions to the equation 
    \begin{equation}\label{tmp:22.12_1}
        a_2 - a'_2 = \frac{a'_1-a_1}{(a_1+x)(a'_1+x)} \,,
    \end{equation}
    where $b=(x,y)$, $a=(a_1,a_2)$ and $a'=(a'_1,a'_2)$. 
    Assume that $a\neq a'$ and rewrite our  equation \eqref{tmp:22.12_1} as 
\begin{equation}\label{eq:quadratic}
    x^2 + (a_1+a'_1)x + a_1 a'_1 + \frac{a_1-a'_1}{a_2-a'_2} = 0 \,,
\end{equation}
    and its discriminant  is $D'(a,a') := (a_1-a'_1) (a_2-a'_2)^{-1} [(a_1-a'_1) (a_2-a'_2) - 4]$. 
    Notice that if $a=a'$, then we have $\_phi (q)$ solutions to equation \eqref{tmp:22.12_1}. 
    By $\chi_p$ denote  the Legendre symbol modulo a prime $p$ and let $\chi_0$ be the main character (modulo $p$). 
    We have the identity  $\chi_p (x^{-1}) = \chi_p (x)$, $x\in \Z_p^*$ and hence
    $\chi_p (D'(a,a')) = \chi_p ( (a_1-a'_1) (a_2-a'_2)^{} [(a_1-a'_1) (a_2-a'_2) - 4] := \chi_p (D(a,a'))$. 
    In view of the Chinese remainder theorem, 
    and our choice of the regularized set $\mathcal{A}$ 
    one has 
\begin{equation}\label{tmp:16.01_1}
    I^2 (a,a') = \prod_{j=s+1}^t \left( \chi_{p_j} (D(a,a')) + \chi_0 (D(a,a')) +  (p_j-1) \d_{p_j} (a_1-a'_1, a_2-a'_2) \right) 
\end{equation}
\begin{equation}\label{tmp:16.01_2}
= \mathcal{E} (a,a') + \prod_{j=s+1}^t (\chi_0 (D(a,a'))+ (p_j-1) \d_{p_j} (a_1-a'_1, a_2-a'_2)) 
= \mathcal{E} (a,a') + \mathcal{E}' (a,a') \,,
\end{equation}
    where for a positive integer $m$ we have put $\d_m (z,w) = 1$ if $z \equiv w \equiv 0 \pmod m$, and zero otherwise. 
    Equivalently, writing $T$ for the segment $[s+1,t]$,  
    one has 
\[
    \mathcal{E} (a,a') = \sum_{\emptyset \neq S \subseteq T}\,  \prod_{j\notin S} (\chi_0 (D(a,a')) + (p_j-1) \d_{p_j} (a_1-a'_1, a_2-a'_2)) \cdot \prod_{j\in S} \chi_{p_j} ( D(a,a')) 
\]
\[
=
    \sum_{\emptyset \neq S \subseteq T}\,  \prod_{j\notin S} w_{p_j} (a,a')
    \cdot \prod_{j\in S} \chi_{p_j} ( D(a,a')) 
    \,.
\]
    Notice that  $\mathcal{E} (a,a) = 0$.
    From \eqref{tmp:16.01_1}, \eqref{tmp:16.01_2}, it follows that 
    $\mathcal{E} u_1 = 0$.
    Indeed, we know that $I^2 u_1 = \mu_1^2 u_1 = \_phi^2 (q) u_1$ and 
\begin{equation}\label{tmp:16.01_3}
    \sum_a \prod_{j=s+1}^t (\chi_0 (D(a,a'))+ (p_j-1) \d_{p_j} (a_1-a'_1, a_2-a'_2)) 
\end{equation}
\begin{equation}\label{tmp:16.01_3.5}
    =
    \prod_{j=s+1}^t \left( \sum_{z,w\in \Z_{p_j}} \chi_0((zw)^2 - 4zw) + p_j-1 \right) 
\end{equation}
\begin{equation}\label{tmp:16.01_4}
    =
    \prod_{j=s+1}^t \left( (p_j-1) \sum_{z\in \Z_{p_j}} \chi_0(z^2 - 4z) + p_j-1 \right) 
    =
    \prod_{j=s+1}^t (p_j-1)^2 = \_phi^2 (q) \,.
\end{equation}
    Hence 
    in very deed 
    $\mathcal{E} u_1 = 0$ and thus    
\begin{equation}\label{f:sigma_E,E'}
\sigma = \langle (I')^2 \mathcal{A}, \mathcal{A} \rangle = 
\langle (I')^2 f_{\mathcal{A}}, f_{\mathcal{A}} \rangle
=
\langle I^2 f_{\mathcal{A}}, f_{\mathcal{A}} \rangle
=
\langle \mathcal{E} \mathcal{A}, \mathcal{A} \rangle + \langle \mathcal{E}' f_{\mathcal{A}}, f_{\mathcal{A}} \rangle\,,
\end{equation}
    where $f_{\mathcal{A}} (a) = \mathcal{A} (a) - \langle \mathcal{A}, u_1 \rangle u_1 (a)$, 
    $\sum_a f_{\mathcal{A}} (a) = 0$.
    Let us estimate the term $r:=\langle \mathcal{E}' f_{\mathcal{A}}, f_{\mathcal{A}} \rangle$ rather roughly. 
    Since the function $f_{\mathcal{A}}$ is orthogonal to $u_1$ and $\| f_{\mathcal{A}}\|_\infty \le 1$, 
    it follows that 
\[
    |r|= \left| \sum_{a,a'} f_{\mathcal{A}} (a) f_{\mathcal{A}} (a') \prod_{j=s+1}^t \left(1- \d_{p_j} (D(a,a')) +(p_j-1)  \d_{p_j} (a_1-a'_1, a_2-a'_2) \right) \right|
\]
\[
    \le
    \sum_{\emptyset \neq S\subseteq T} \left| \sum_{a,a'} f_{\mathcal{A}} (a) f_{\mathcal{A}} (a') \prod_{j\in S} (- \d_{p_j} (D(a,a')) +(p_j-1)  \d_{p_j} (a_1-a'_1, a_2-a'_2) ) \right|
\]
\[
    \le
    2|\mathcal{A}| q^2 \sum_{n=1}^{t-s}\,  \sum_{S\subseteq T,\, |S|=n}\, \prod_{j\in S} \left( \frac{3}{p_j} + \frac{p_j-1}{p^2_j} \right)
    \le 
    2|\mathcal{A}| q^2 \sum_{n=1}^{t-s}\, \binom{t-s}{n} \left(\frac{4}{M} \right)^n
\]
\begin{equation}\label{f:sigma_E,E'+}
    \le 10 |\mathcal{A}| q^2 t M^{-1}
    \,.
\end{equation}

    Now returning to the definition of the operator $\mathcal{E} (a,a')$, recalling estimate \eqref{tmp:28.12_1} and using the  Cauchy--Schwarz inequality, 
    we obtain 
\[
    \sigma^2 
    \le |\mathcal{A}| \sum_{a,a' \in \mathcal{A}}  \sum_{x,y} \sum_{\emptyset \neq S_1,S_2 \subseteq T}\, \prod_{i\in S_1,\, j\in S_2} \chi_{p_i} (D((x,y),(a_1,a_2)) \chi_{p_j} (D((x,y),(a'_1,a'_2))
\]
\begin{equation}\label{tmp:22.12_3}
 \prod_{i\notin S_1,\, j\notin S_2} w_{p_i} ((x,y),(a_1,a_2)) w_{p_j} ((x,y),(a'_1,a'_2))
     \,.
\end{equation}
    The term with  $a \equiv a' \pmod q$ gives us a contribution at most $4^{t} |\mathcal{A}| q^2$ into the last sum (see \eqref{tmp:16.01_3}---\eqref{tmp:16.01_4} to estimate $\| w_{p_j}\|_1$ for $j\notin S$ and use the trivial fact that $\|\chi_p \|_{\infty} \le 1$ to bound the rest).
    Now let $a \neq a' \pmod q$ but $a \equiv a' \pmod {q_*}$ with maximal $q_*|q$.
    Thus $q_* \neq q$ and $Q_1|q_*$. 
    We can write $q_* = q_* (W) = Q_1 \prod_{j\in W} p_j$ for a certain (possibly empty) set $W \subseteq T$.
    Let us say that all primes $p$ such that $p|(q/q_*)$ (that is, $p|q$ and $p\notin W$) are {\it good}.
    In particular, for all good primes $p$ one has $p>M$.
    Now for a good prime $p$ the sum above $\sum_{x,y \mod \Z_p}\chi_p (D(x,y), (a_1,a_2))$ (or, analogously, the sum $\sum_{x,y \mod \Z_p}\chi_p (D(x,y), (a'_1,a'_2))$) is either at most   $3p^{3/2}$ by Weil, 
    or the sum over $y$ is $p$ if  
    $\frac{2}{x-a_1} + a_2 = \frac{2}{x-a'_1} + a'_2$ modulo $p$. 
    The last equation is nontrivial one by our choice of $p$ hence it has at most two solutions and thus 
    in any case 
    the sum over $x,y \mod \Z_p$ is at most $3p^{3/2} < 3p^2/\sqrt{M}$.
    Further we split the sets $S_1,S_2$ as $S_1 = S^*_1 \bigsqcup G_1$, $S_2 = S^*_2 \bigsqcup G_2$, where (possibly empty) sets $G_1, G_2$ correspond to good primes 
    and the sets $S^*_1 \subseteq W$, $S^*_2 \subseteq W$ correspond to the  divisors of $q_* (W)$. 
    Since $S_1, S_2 \neq \emptyset$,
    it follows 
    that either 
    $G_1 \bigcup G_2 \neq \emptyset$ 
    or $S^*_1,S^*_2 \neq \emptyset$. 
Using the notation as in \eqref{def:eta}, namely, 
\begin{equation}\label{def:eta2}
    \eta_{\t{q}} (\xi) := |\{ a\in \mathcal{A} ~:~ a \equiv \xi \pmod {\t{q}} \}| \,, \quad \quad \tilde{q}|q,\, \xi \in \Z^2_{\t{q}} 
    \,, 
\end{equation}
    we see that the number of pairs $a\equiv a' \pmod {\t{q}}$ is exactly $\| \eta_{\t{q}} \|_2^2$ for any $\t{q} |q$ and one can use bound \eqref{f:eta_L2} to estimate the last quantity.
    Now 
    recalling 
    inequality 
    \eqref{f:regularization_Zq} and splitting sum \eqref{tmp:22.12_3} according the case $W \neq \emptyset$ or not, we get 
\[
    \sigma^2 |\mathcal{A}|^{-1} \le 
\]
\begin{equation}\label{tmp:28.12_2}
    4^{t} |\mathcal{A}| q^2 + 
    q^2 \sum_{\emptyset \neq W \subseteq T}\, \sum_{a,a' \in \mathcal{A},\, a \equiv a' \pmod {q(W)}} 4^{|W|}
    +
    q^2 \sum_{a,a' \in \mathcal{A}} \sum_{n+m\ge 1} \binom{t-s}{n} \binom{t-s}{m} \left(\frac{3}{\sqrt{M}} \right)^{n+m}
%
\end{equation}
\[
    \le 
     4^{t} |\mathcal{A}|^{} q^2 
     +
     q^2 |\mathcal{A}|^{2} \sum_{\emptyset \neq W \subseteq T} 4^{|W|} M^{-3|W|/4}
     +
     4 q^2 |\mathcal{A}|^{2} t^{} M^{-1/2} 
     \ll 
     q^2 |\mathcal{A}|^{2} t^{} M^{-1/2} \,.
\]
    Using \eqref{tmp:22.12_-1}, \eqref{tmp:22.12_-1'},  
    \eqref{tmp:28.12_1}, \eqref{f:sigma_E,E'+}
    and the Cauchy--Schwarz inequality, we get
\begin{equation}\label{tmp:28.12_2'}
    N_{\mathcal{A}, \mathcal{B}} (\la) - \frac{|\mathcal{A}| |\mathcal{B}| \_phi (q)}{q^2} 
    \ll 
    (t^{} M^{-1/2})^{1/4} \cdot |\mathcal{A}|^{3/4} \sqrt{|\mathcal{B}| q}  
    +
    (t M^{-1})^{1/2} \cdot \sqrt{|\mathcal{A}| |\mathcal{B}|} q \,.
\end{equation} 
    We have $\_phi (q) \gg q/\log t$ and hence after some calculations we see that $N_{\mathcal{A}, \mathcal{B}} (\la)> 0$ provided 
    $M\gg t^{2} \beta^{-6}  \log^8 t$. 
    As in Theorem \ref{t:Fish_new}, 
    one has 
    $Q_1 \le M^t$ and thus 
\[
    d \ll \exp ( t \log M ) = \exp (O(t \log t - \log \beta) ) \,. 
\]

    In the case of prime $q$ the argument is even simpler because one do not need the regularization, the second term in \eqref{tmp:28.12_2} plus the quantity $r$ is negligible, see estimate \eqref{f:sigma_E,E'+}.
    Finally, let $\mathcal{A} = \mathcal{B}$ and if $\lambda \notin \Z^*_q$, then 
     write $\la = q' \la'$, where $q' |q$ and $\la'\in \Z^*_{q_*/q'}$.
    Using the Dirichlet principle, choose  a subset of $\mathcal{A}' \subseteq \mathcal{A}$ of density at least $\a$ such that 
    $|\pi_{q'} (\mathcal{A}')|=1$.
    Then  
    the required 
    inclusion \eqref{f:Fish-Vinh1.5} 
    can be rewritten as 
    \[
    \la' \in \{ (a_1-b_1) (a_2 - b_2) ~:~ (a_1,a_2),  (b_1,b_2) \in \mathcal{A}  \}
    \]
    and we can apply the arguments above replacing $q_*$ to $q_*/q'$. 
    This completes the proof. 
$\hfill\Box$
\end{proof}



\begin{remark}
    Of course, inclusion \eqref{f:Fish-Vinh1.5} does not hold for $\mathcal{A} \neq \mathcal{B}$, just take $\mathcal{A} = (d  \cdot \Z_q) \times (d  \cdot \Z_q)$ and  $\mathcal{B} = (d  \cdot \Z_q + 1) \times (d  \cdot \Z_q + 1)$ for an arbitrary $d|q$, $1< d \ll 1$.  
    Also, the author thinks that the error term in \eqref{f:Fish-Vinh2} can be improved but this weaker bound is enough for us to resolve our equation for sets of positive densities.  
\end{remark}

\begin{remark}
    The attentive reader may be alerted that we have two different main terms in \eqref{tmp:01.12_1} and in \eqref{tmp:28.12_2'}. Nevertheless, they are asymptotically the same due to the fact that in \eqref{tmp:28.12_2'} our parameter $M$ depends on growing quantity $\omega(q)$.  
\end{remark}


Similarly, we obtain an affirmative answer to \cite[Problem 1]{fish2018product} in the case of squarefree $q$.    
By $M_{\mathcal{A}, \mathcal{B}} (\la)$ denote the number of the solutions to the equation 
\begin{equation}\label{f:N=(A-A)^2-(B-B)^2}
    (a_1-b_1)^2 -  (a_2 - b_2)^2 \equiv \la \pmod q \,, \quad \quad (a_1,a_2) \in \mathcal{A},\, (b_1,b_2) \in \mathcal{B} \,.
\end{equation}

\begin{theorem} 
    Let $q$ be a squarefree  number, $\mathcal{A}, \mathcal{B} \subseteq \F_q^2$ be sets, 
    $|\mathcal{A}| = \a q^2$, $|\mathcal{B}|=\beta q^2$, and suppose that $\a \ge \beta$. 
    Then 
\begin{equation}\label{f:Fish-Vinh1+}
    d \Z^*_q \subseteq 
    \{ (a_1-b_1)^2 - (a_2 - b_2)^2 ~:~ (a_1,a_2) \in \mathcal{A},\, (b_1,b_2) \in \mathcal{B} \} 
\end{equation} 
    with 
\[
    d \ll \exp (O(\o(q) \log \o(q) - \log \beta)    \,.
\]
    In particular, for $\mathcal{A} = \mathcal{B}$ one has with the same $d$ that 
\begin{equation}\label{f:Fish-Vinh1.5+}
    d \Z_q \subseteq 
    \{ (a_1-b_1)^2 -  (a_2 - b_2)^2 ~:~ (a_1,a_2) \in \mathcal{A},\, (b_1,b_2) \in \mathcal{B} \} 
    \,.
\end{equation} 
    In the case when $q$ is a prime number one has 
\begin{equation}\label{f:Fish-Vinh2+}
    M_{\mathcal{A}, \mathcal{B}} (\la) - \frac{|\mathcal{A}| |\mathcal{B}|}{q}
        <
        4 q^{7/8} \sqrt{|\mathcal{A}||\mathcal{B}|} \,.
\end{equation} 
\label{t:Fish-Vinh+}
\end{theorem}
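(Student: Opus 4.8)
The plan is to reduce this statement directly to Theorem \ref{t:Fish-Vinh} by exploiting the factorization of a difference of two squares into a product of two linear forms. Writing $X = a_1 - b_1$ and $Y = a_2 - b_2$, one has $X^2 - Y^2 = (X-Y)(X+Y)$, and regrouping the coordinates of $a$ and $b$ separately yields
\[
(a_1-b_1)^2 - (a_2-b_2)^2 = \big((a_1-a_2) - (b_1-b_2)\big)\big((a_1+a_2) - (b_1+b_2)\big).
\]
Thus, introducing the linear map $\phi(z_1,z_2) = (z_1 - z_2,\, z_1 + z_2)$ and setting $\mathcal{A}' = \phi(\mathcal{A})$, $\mathcal{B}' = \phi(\mathcal{B})$, the left-hand side takes exactly the product shape $(\phi(a)_1 - \phi(b)_1)(\phi(a)_2 - \phi(b)_2)$ appearing in equation \eqref{f:N=(A-A)(B-B)}. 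Consequently $M_{\mathcal{A},\mathcal{B}}(\lambda) = N_{\mathcal{A}',\mathcal{B}'}(\lambda)$, and the whole problem for the difference of squares becomes the product problem already solved.

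First I would verify that $\phi$ is an automorphism of $\Z_q^2$. Its determinant is $2$, so for odd $q$ the map is a bijection; hence $|\mathcal{A}'| = |\mathcal{A}|$, $|\mathcal{B}'| = |\mathcal{B}|$, and the densities $\alpha,\beta$ are preserved, as is the hypothesis $\alpha \ge \beta$. Applying Theorem \ref{t:Fish-Vinh} to the pair $\mathcal{A}',\mathcal{B}'$ then transfers all three conclusions essentially verbatim: the inclusion \eqref{f:Fish-Vinh1+} with the same divisor bound $d \ll \exp(O(\omega(q)\log\omega(q) - \log\beta))$; the symmetric case \eqref{f:Fish-Vinh1.5+} for $\mathcal{A} = \mathcal{B}$ (where $\mathcal{A}' = \mathcal{B}'$ holds trivially); and the prime-case asymptotic \eqref{f:Fish-Vinh2+}, since $N_{\mathcal{A}',\mathcal{B}'}(\lambda)$ satisfies \eqref{f:Fish-Vinh2} with $|\mathcal{A}'| = |\mathcal{A}|$ and $|\mathcal{B}'| = |\mathcal{B}|$. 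No new character-sum estimates are needed; the Kloosterman/Weil machinery is inherited through the identification $M = N$.

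The one delicate point, and the main obstacle, is the prime $2$, where $\det\phi = 2$ is not invertible, so $\phi$ fails to be bijective when $q$ is even. Since $q$ is squarefree, at worst $2 \,\|\, q$; I would split $q = 2q'$ by the Chinese remainder theorem and treat the $\Z_2$-component on its own. Modulo $2$ one has $x^2 \equiv x$, so the difference of squares collapses to the linear condition $(a_1 - b_1) + (a_2 - b_2) \equiv \lambda \pmod 2$, which is solvable for both residues of $\lambda$ provided the projections of $\mathcal{A},\mathcal{B}$ to $\Z_2^2$ are non-degenerate, and in any case contributes only a bounded factor that can be absorbed into the divisor $d$. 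For odd $q$ the reduction is clean and immediate, while for even $q$ the single prime $2$ changes $d$ by at most a factor of $2$, which is negligible against the stated bound. With this caveat handled, every assertion of the theorem follows from Theorem \ref{t:Fish-Vinh}.
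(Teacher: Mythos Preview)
Your reduction is correct and is precisely the alternative route the paper itself records in the Remark immediately following the proof: writing $g(x,y)=(x+y,x-y)$ one has $\tilde I(a,b)=I(ga,gb)$, so the difference--of--squares problem collapses to Theorem~\ref{t:Fish-Vinh} applied to $g^{-1}(\mathcal A),\,g^{-1}(\mathcal B)$. Your map $\phi$ is the same transformation up to swapping the two output coordinates, and your verification that $\det\phi=2$ forces $q$ odd (with the factor $2$ harmlessly absorbed into $d$ otherwise) is the only point requiring care.

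The paper's \emph{main} proof, however, does not use this shortcut. It reruns the machinery of Theorem~\ref{t:Fish-Vinh} from scratch for the new incidence matrix $\tilde I$: computing $\tilde I^{\,2}(a,a')$, writing down the associated quadratic in $x$ and its discriminant $\tilde D(a,a')$, checking that $\mathcal E u_1=0$, and invoking Weil on the resulting character sums. Your approach is shorter and more conceptual, since all the analytic work is inherited wholesale; the paper's direct argument has the advantage of being self--contained and of exhibiting explicitly the discriminant polynomial, which in principle lets one treat variants (other quadratic forms in the differences) where no convenient factorisation over $\Z$ exists.
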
 
\begin{proof} 
    The argument differs from the proof of Theorem \ref{t:Fish-Vinh} in some unimportant details only, so we use the notation from the former result. 
    Indeed,  for $a = (a_1,a_2)$ and $b=(b_1,b_2)$ we write $\t{I}(a,b) = 1$ if the pair $a,b$ satisfies \eqref{f:N=(A-A)^2-(B-B)^2} and $\t{I}(a,b) = 0$ otherwise.
    Calculating $\t{I}^2 (a,a')$, we arrive to the equation 
\begin{equation}\label{tmp:10.01_1}
    a_1^2 - (a'_1)^2 + 2(a'_1 - a_1) x - a_2^2 + (a'_2)^2 + 2(a_2 - a'_2) y = 0
\end{equation}
    and hence we can find $x$ via $y$ or $y$ via $x$, provided $a\neq a' \pmod q$. 
    Assuming that $a'_2 \neq a_2$, say, we 
    derive
\[
    y= \frac{(a'_1)^2-a_1^2 +a_2^2 - (a'_2)^2}{2(a_2 - a'_2)} + \frac{a_1-a'_1}{a_2 - a'_2} \cdot x = s + t x \,,
\]
    and hence substituting the last expression into 
    \eqref{f:N=(A-A)^2-(B-B)^2} 
    and 
    computing 
    the discriminant $\t{D} (a,a')$ (without loss of the generality, we  put $\la=1$), one obtains 
\[
    \t{D}(a,a') = (t (a_2-s)-a_1)^2 + (1-t)^2 (1+(a_2-s)^2 - a_1^2) 
\]
\begin{equation}\label{tmp:16.01_5} 
    =
    2t (t-1) (a_2-s)^2 - 2a_1 t(a_2-s) + (1-t)^2 (1-a_1^2) + (a_2-s)^2  + a_1^2 
    \,.
\end{equation} 
    As in the proof of Theorem \ref{t:Fish-Vinh} we consider $\mathcal{E} (a,a')$, take good primes and so on. 
    The first eigenvalue $\mu_1$ equals the number of the solutions to the equation $x^2-y^2 \equiv 1 \pmod q$, that is, $\_phi (q)$ again. 
    Also, $\t{I}^{2} (a,a) = \mu_1$ and for $a\neq a'$ the quantity $\t{I}^{2} (a,a')$ expressed exactly as in \eqref{tmp:16.01_1} (with 
    another discriminant 
    $\t{D}$, of course) and thus 
    one can 
    check 
    that $\mathcal{E} u_1$ vanishes 
    making calculations as in \eqref{tmp:16.01_3}---\eqref{tmp:16.01_4}. 
    Further as in Theorem \ref{t:Fish-Vinh}  
    we apply the standard Weil bound to estimate  the sum of characters. 
    For any good prime $p$ it gives us a nontrivial 
    bound 
    of the form $O(p^{3/2}) = O(p^2/\sqrt{M})$ and hence we obtain \eqref{f:Fish-Vinh1+}
    and 
    thus 
    \eqref{f:Fish-Vinh1.5+} by the same argument as at the end of Theorem \ref{t:Fish-Vinh} 
    (one can 
    check or see below that all obtained varieties are non--degenerated). 
    Finally, to get \eqref{f:Fish-Vinh2+} we need to estimate 
\[
    \sum_{a,a' \in \mathcal{A}}  \sum_{x,y}  \chi_{q} (\t{D}((x,y),(a_1,a_2))) \chi_{q} (\t{D}((x,y),(a'_1,a'_2)))
\]
    and by the Weil estimate it is at most $20  q^{3/2}$, say, excluding the case
    $\t{D}((x,y),(a_1,a_2))$ is proportional to $\t{D}((x,y),(a'_1,a'_2))$.  
    In particular, it means that the coefficients of these polynomials are proportional ones and 
    using \eqref{tmp:16.01_5}  and  comparing the coefficients before the highest degrees in $x$, say, we get  
    $\frac{a_2-2a_1-y}{(y-a_2)^4} = \frac{a'_2-2a'_1-y}{(y-a'_2)^4}$.
    Again, 
    thanks to $a \neq a'$ we see that 
    this equation is nontrivial one and hence it has at most four solutions.
    It follows  that 
    our sum is at most $4q$ in this case. 
    Thus as in \eqref{tmp:28.12_2}, \eqref{tmp:28.12_2'} we have 
\[
    M_{\mathcal{A}, \mathcal{B}} (\la) - \frac{|\mathcal{A}| |\mathcal{B}| \_phi (q)}{q^2} 
    \le
    3
    (q^{3/2} |\mathcal{A}| )^  {1/4} \sqrt{|\mathcal{A}| |\mathcal{B}|}  
    \le 
    3 q^{7/8} \sqrt{|\mathcal{A}| |\mathcal{B}|}  \,.
\] 
    This completes the proof. 
$\hfill\Box$
\end{proof}

\begin{remark}
    We have used a direct way of  the proof of Theorem \ref{t:Fish-Vinh+} above, another approach is to notice that $\tilde{I}(a,b) = I(ga,gb)$, where the linear transformation $g$ is given by the formula $g(x,y) = (x+y,x-y)$. After that one can apply Theorem \ref{t:Fish-Vinh} with the sets $g^{-1}(\mathcal{A})$, $g^{-1}(\mathcal{B})$. 
\end{remark}


\section{On an application of group actions} 
\label{sec:group_Fish}

In this section we discuss another approach to results of Fish--type, namely, we consider an intermediate situation between Theorems \ref{t:Fish_new} and \ref{t:Fish-Vinh}: our set $\mathcal{A} \subseteq \Z^2_q$ is an arbitrary but the set $\mathcal{B} \subseteq \Z^2_q$ is a Cartesian product. 
In this case one can deal with rather general $q$ (and not just squarefree). For simplicity, we do not do any regularization as in the previous section immediately assuming 
that all prime factors of $q$ are large. 

\bp 

In the proof we follow the methods from \cite{bourgain2008expansion} and \cite{sh_Kloosterman}.

\begin{theorem}
    Let $q$ be a positive odd integer,  
    and $\mathcal{A}, \mathcal{B} \subseteq \Z_q^2$ be sets, 
    $|\mathcal{A}| = \d q^2$, $\mathcal{B} = A\times B$, $|A| = \a q$, $|B| = \beta q$. 
    Suppose that all prime divisors of $q$ are at least $M$, 
    where 
\[
   M \ge C_1 \tau (q) \d^{-2} (\sqrt{\a \beta} (\log \log q)^{-1})^{-C_2} \,,
\]
    and 
    $C_1, C_2>0$ are absolute constants. 
    Then 
\begin{equation}\label{f:Fish_action}
    \Z^*_q \subseteq 
    \{ (a_1-b_1) (a_2 - b_2) ~:~ (a_1,a_2) \in \mathcal{A},\, (b_1,b_2) \in \mathcal{B} \} 
    \,.
\end{equation} 
\label{t:Fish_action}
\end{theorem}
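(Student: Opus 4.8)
The plan is to fix $\lambda \in \Z^*_q$ and to show that the number $N(\lambda)$ of solutions of $(a_1-b_1)(a_2-b_2) \equiv \lambda \pmod q$ with $(a_1,a_2)\in\mathcal{A}$, $(b_1,b_2)\in A\times B$ is strictly positive. The geometric idea is that the point $(a_1-b_1,a_2-b_2)$ is forced to lie on the hyperbola $H_\lambda = \{(x,y)\in\Z_q^2 : xy=\lambda\}$, and since $\lambda$ is invertible this set is a single orbit of the torus $\{(u,u^{-1}):u\in\Z^*_q\}$ acting on $\Z_q^2$ by coordinatewise multiplication $(u,u^{-1})\cdot(x,y)=(ux,u^{-1}y)$; concretely $H_\lambda=\{(u,\lambda u^{-1}):u\in\Z^*_q\}$. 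Thus $N(\lambda)=\sum_{(x,y)\in H_\lambda} r(x,y)$, where $r(x,y)=\sum_{a} \mathcal{A}(a)\mathcal{B}(a-(x,y))$ is the correlation of $\mathcal{A}$ and $\mathcal{B}$, and it suffices to prove that $N(\lambda)$ stays within a smaller error of its expected value $\varphi(q)|\mathcal{A}||\mathcal{B}| q^{-2}$, following the mixing philosophy of \cite{bourgain2008expansion} and \cite{sh_Kloosterman}.

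Next I would expand $r$ by the Fourier transform on $\Z_q^2$ and sum over the orbit $H_\lambda$. The crucial point is that the character sum along the orbit is precisely a Kloosterman sum, $\sum_{(x,y)\in H_\lambda} e_q(\xi x+\eta y) = K_q(\xi,\lambda\eta)$, so that the eigenvalues governing the mixing of the torus action are exactly the objects estimated by Weil. Separating the trivial frequency $\xi=\eta=0$ produces the main term $\varphi(q)|\mathcal{A}||\mathcal{B}| q^{-2}$, while the remainder is $q^{-2}\sum_{(\xi,\eta)\neq 0}\widehat{\mathcal{A}}(\xi,\eta)\,\overline{\widehat{A}(\xi)}\,\overline{\widehat{B}(\eta)}\,K_q(\xi,\lambda\eta)$. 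Here the Cartesian structure $\mathcal{B}=A\times B$ is essential: it factorizes $\widehat{\mathcal{B}}(\xi,\eta)=\widehat{A}(\xi)\widehat{B}(\eta)$ and thereby decouples the two frequency variables in the error.

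To bound the error I would apply the Cauchy--Schwarz inequality in $(\xi,\eta)$. Parseval controls one factor, $\sum_{\xi,\eta}|\widehat{\mathcal{A}}(\xi,\eta)|^2 = q^2|\mathcal{A}|$, and for the second factor I would insert the Weil bound $|K_q(a,b)| \le \tau(q)\gcd(a,b,q)^{1/2} q^{1/2}$. Writing $\gcd(\xi,\lambda\eta,q)=\gcd(\xi,\eta,q)=\sum_{d\mid(\xi,\eta,q)}\varphi(d)$ and reorganizing the sum over divisors $d\mid q$, the term $d=1$ yields, again by Parseval, the dominant contribution $\alpha\beta q^4$, whereas every term with $d>1$ forces $d\ge M$ because all prime factors of $q$ are at least $M$. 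The largeness of $M$ is exactly what substitutes here for the regularization employed in Section \ref{sec:general_case} and renders the contribution of those frequencies that share a prime factor with $q$ negligible; the divisor function $\tau(q)$ enters through the Weil bound, and the factor $\log\log q$ through the Mertens-type estimate $\varphi(q)\gg q/\log\log q$.

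Finally, comparing the resulting bound with the main term $\varphi(q)|\mathcal{A}||\mathcal{B}|q^{-2}$ and solving for the threshold on the least prime factor yields the stated condition $M\ge C_1\tau(q)\delta^{-2}(\sqrt{\alpha\beta}(\log\log q)^{-1})^{-C_2}$, under which the error is strictly dominated and hence $N(\lambda)>0$ for every $\lambda\in\Z^*_q$, giving \eqref{f:Fish_action}. I expect the main obstacle to be the control of Kloosterman sums for composite, non-squarefree $q$: unlike the prime case, the arguments $\xi$ and $\lambda\eta$ may share prime factors with $q$, so one must carefully track the $\gcd$/divisor structure and verify that the contribution of such ``bad'' frequencies is genuinely beaten by the largeness of $M$, uniformly in $\lambda$. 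This bookkeeping, rather than any single inequality, is the technical heart of the argument.
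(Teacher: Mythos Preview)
Your approach is genuinely different from the paper's. The paper does \emph{not} use Fourier analysis on $\Z_q^2$ and Kloosterman sums here; instead it encodes the equation $(a_1-b_1)(a_2-b_2)=\lambda$ as a M\"obius action $a=gb$ with $g\in G\subset \SL_2(\Z_q)$ (the set $G$ being parametrised by $\mathcal{A}$), bounds the multiplicative energy $\E(G)\le \tau(q)q|G|^2$ by direct counting, invokes the fact that every nontrivial irreducible representation of $\SL_2(\Z_q)$ has dimension $\ge M/3$, and then feeds the resulting operator--norm bound $\|\FF{G}\|_{op}\le |G|(3\tau(q)/(M\d^2))^{1/4}$ into the iterated ``standard scheme'' of \cite{sh_Kloosterman}. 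The powers $\tau(q)^{1}$ and $\d^{-2}$ in the stated threshold, and the unspecified exponent $C_2$, come precisely from this machinery; a single Cauchy--Schwarz plus Weil cannot reproduce that exact shape.

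Your elementary route is close to working, but the execution has a real gap. After Cauchy--Schwarz and Weil you are bounding
\[
\Sigma \;=\; \sum_{(\xi,\eta)\neq 0}|\FF{A}(\xi)|^2|\FF{B}(\eta)|^2 |K_q(\xi,\lambda\eta)|^2
\;\le\; \tau(q)^2 q \sum_{d\mid q}\varphi(d)\sum_{\substack{d\mid \xi,\, d\mid\eta\\ (\xi,\eta)\neq 0}} |\FF{A}(\xi)|^2|\FF{B}(\eta)|^2 .
\]
Your claim that ``the term $d=1$ yields the dominant contribution'' is false: without any regularity of $A,B$, Parseval only gives $\sum_{d\mid \xi}|\FF{A}(\xi)|^2\le q|A|$ for every $d$, so the $d>1$ piece is bounded by $q^2|A||B|\sum_{d\mid q,\,d>1}\varphi(d)\approx q^3|A||B|$, which swamps the $d=1$ term by a factor of $q$. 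The largeness of $M$ does not make these terms negligible in your decomposition. What \emph{does} work is to forgo the divisor expansion and simply use that $\gcd(\xi,\eta,q)\le q/M$ whenever $(\xi,\eta)\neq 0$ (the largest proper divisor of $q$ is at most $q/M$); then $|K_q|^2\le \tau(q)^2 q^2/M$ uniformly, and Parseval gives $\Sigma\le \tau(q)^2 q^4|A||B|/M$. Comparing with the main term $\_phi(q)|\mathcal{A}||A||B|q^{-2}$ yields the sufficient condition $M\gg \tau(q)^2(\log\log q)^2/(\d\a\b)$, which is a valid theorem but not the one stated: you get $\tau(q)^{2}\d^{-1}(\a\b)^{-1}$ in place of the paper's $\tau(q)\d^{-2}(\a\b)^{-C_2/2}$. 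The two thresholds are incomparable, and the $\d^{-2}$ in the paper's formulation is a fingerprint of the energy/representation argument that your method cannot recover.
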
 
\begin{proof}
    Let $q=p^{\rho_1}_1 \dots p^{\rho_t}_t$, 
    where $p_j$ are different odd primes and $\rho_j$ are positive integers. 
    By our assumption $p_j \ge M$ for all $j\in [t]$.
    Without loosing of the generality one can take $\la = -1$ in  
    formula 
    \eqref{f:N=(A-A)(B-B)}.
    Recall that $\SL_2(\Z_q)$ acts on 
    $\Z_q$ 
    via M\"obius transformations: $x\to gx = \frac{ax+b}{cx+d}$, where $g=\left(\begin{array}{cc}
a & b \\
c & d
\end{array}\right)$ (for composite $q$ the equivalence is taken over $\Z^*_q$, of course).
    Since $\mathcal{B} = A\times B$ we can rewrite our equation \eqref{f:N=(A-A)(B-B)}  as 
\begin{equation}\label{f:ga=b}
    a=gb \,, \quad \quad a\in A\,, b\in B\,, g\in G \,,
\end{equation}
    where $G \subset \SL_2 (\Z_q)$ is the set of matrices of the form 
\[
g=
\left( {\begin{array}{cc}
	-\a & \a \beta+1 \\
	-1 & \beta \\
	\end{array} } \right)  \,, \quad \quad (\a,\beta) \in \mathcal{A} 
\,,
\]
    see \cite[Section 5]{sh_Kloosterman} or just make a direct calculation. 
    Clearly, $|G| = |\mathcal{A}|$. 
    Further by \cite[Lemma 15]{sh_Kloosterman} the {\it multiplicative energy} $\E(G)$ of the set $G$, that is, 
\[
    \E(G) = |\{ (g_1,g_2,g_3,g_4) \in G\times G \times G \times G ~:~ g_1 g_2^{-1} = g_3 g_4^{-1} \}|  
\]
    coincides with the number of the solutions  to the system 
\[
    \beta_1 - \beta_2 = \beta_3 - \beta_4 := s \,, \quad \quad 
    s(\a_1 - \a_3) = s(\a_2 - \a_4) = 0 \,, \quad \quad 
    \a_1 - \a_2 - \a_1 \a_2 s = \a_3 - \a_4 - \a_3 \a_4 s \,,
\]
    where $(\a_i,\beta_i) \in \mathcal{A}$, $i\in [4]$. 
    Let $s = d s'$, where $d$ is a divisor of $q$ and $s'$ is coprime to $q$. Taking $(\a_1,\beta_1), (\a_4,\beta_4) \in \mathcal{A}$, we find $\beta_2$, $\beta_3$ from the first equation  and $\a_2,\a_3$ modulo $q/d$ from the second one.
    Also, using $\a_3$ we can reconstruct $\a_2$ from the third equation, provided $d>1$.  
    In other words, for fixed $d$ there are $q/d$ possibilities for $s'$ and $d$ possibilities for $\a_3$. 
    Finally, if $d=1$, then we have at most $q|G|^2$ solutions. 
    Thus we 
    obtain  
    the bound  
\begin{equation}\label{tmp:12.01_1}
    \E(G) \le 
    |G|^2 \sum_{d|q}  \frac{q}{d} \cdot d  
    \le 
    \tau(q) q |G|^2 \,.
\end{equation}

    Now let us say a few words about representations of the group $\SL_2 (\Z_q)$, see  \cite[Sections 7, 8]{bourgain2008expansion}. 
    First of all, for any irreducible representation $\rho_q$ of $\SL_2 (\Z_q)$ we have $\rho = \rho_q = \rho_{p^{\rho_1}_1} \otimes \dots \otimes \rho_{p^{\rho_t}_t}$ and hence it is sufficient to understand the representation theory for $\SL_2 (\Z_{p^n})$, where $p$ is a prime number and $n$ is  a positive integer.  
    Now by \cite[Lemma 7.1]{bourgain2008expansion} we know that for any odd prime the dimension of any faithful irreducible representation of $\SL_2 (\Z_{p^n})$ is at least $2^{-1} p^{n-2} (p-1)(p+1)$.
    For an arbitrary  $r\le n$ we can consider the natural projection $\pi_r : \SL_2 (\Z_{p^n}) \to \SL_2 (\Z_{p^r})$ and let $H_r = \mathrm{Ker\,} \pi_r$. One can show that the set $\{ H_r \}_{r\le n}$ gives all normal subgroups of $\SL_2 (\Z_{p^n})$ and hence any nonfaithful irreducible representation arises as a faithful irreducible representation of $\SL_2 (\Z_{p^r})$ for a certain $r<n$.  Anyway, we see that  the multiplicity (dimension) $d_\rho$ of any nontrivial irreducible representation $\rho$ of  $\SL_2 (\Z_{p^n})$ is at least $p/3 \ge M/3$. 

    Applying  estimate \eqref{tmp:12.01_1}, using the formula for $\E(G)$ via the representations and taking into account the obtained lower bound for the multiplicities of the representations, we get
\[
    \frac{M\| \FF{G}\|^4_{op}}{3|\SL_2 (\Z_q)|}  \le \frac{1}{|\SL_2 (\Z_q)|} \sum_{\rho} d_\rho \| \FF{G} (\rho) \FF{G}^* (\rho) \|_{}^{2} 
    = \E(G) \le \tau(q) q |G|^2  \,,
\]
    and hence 
\begin{equation}\label{tmp:12.01_2}
    \| \FF{G}\|_{op} \le |G| \cdot \left( \frac{3\tau(q)}{M\d^2} \right)^{1/4} := \frac{|G|}{K} \,,
\end{equation}
    where by $\| \FF{G}\|_{op}$ we have denoted the maximum 
    of 
    the operator norm of matrices $\FF{G} (\rho)$  for all nontrivial representations $\rho$ and $\| \cdot \|$ is the usual Hilbert--Schmidt norm. 
    Thanks to our choice of $M$ one can see that bound \eqref{tmp:12.01_2} is nontrivial, that is, $K>1$. 
    Returning to \eqref{f:ga=b} and using the standard scheme, e.g., see \cite[Lemma 13, Section 5 and Section 6]{sh_Kloosterman}, we obtain 
\[
    N_{\mathcal{A}, \mathcal{B}} (\la) - \frac{|A||B||G| \_phi(q)}{q^2}
    \le 
    \sqrt{|A||B|} |G| q^{-1/k} \,,
\]
    where $k \sim \log q/\log K$. 
    Hence $N_{\mathcal{A}, \mathcal{B}} (\la) > 0$, provided $K \gg (\sqrt{\a \beta} (\log \log q)^{-1})^{-O(1)}$. 
    The last condition is equivalent to $M\gg \tau (q) \d^{-2} (\sqrt{\a \beta} (\log \log q)^{-1})^{-O(1)}$. 
    This completes the proof. 
$\hfill\Box$
\end{proof}

\section{On the covering numbers of difference sets}
\label{sec:covering}

Let us recall the definition of the covering number of a set, e.g., see \cite{bollobas2011covering}. 

\begin{definition}
    Let $\Gr$ be a finite abelian group with the group operation $+$, and let $A\subseteq \Gr$ be a set.
    We write 
\[
    \cov^{+}(A) = \cov (A) = \min \{ |X| ~:~ X\subseteq \Gr,\, A+X = \Gr \} 
\]
    and the quantity $\cov^{+} (A)$ is called the (additive) {\bf covering number} of $A$. 
\end{definition}

Having a finite ring $\mathcal{R}$ with two operations $+, \times$ we underline which covering number we use, writing $\cov^{+}$ or $\cov^\times$. 
It is known \cite[Corollary 3.2]{bollobas2011covering} that for any set $A \subseteq \Gr$ one has $\cov^{+} (A) = O\left(\frac{|\Gr|}{|A|} \log |A| \right)$ and 
the last 
bound is tight. 
In this section we study difference sets $A-A$, $A \subseteq \Z_q$ and show that $\cov^{\times} (A-A)$ is always small. 
First of all, let us make a remark about a connection between $\cov^{+}$ and $\cov^{\times}$ in a ring $\mathcal{R}$.

\begin{proposition}
    Let $\mathcal{R}$ be a finite ring, and $S \subseteq \mathcal{R}$ be a set.
    Then 
\begin{equation}\label{f:cov+/*}   
    \cov^\times (S-S) \le \cov^{+} (S) \,,
\end{equation}  
    provided all numbers $1,\dots, \cov^{+} (S)$ belong to $\mathcal{R}^*$. 
\label{p:cov+/*}   
\end{proposition}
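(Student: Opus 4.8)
The plan is to turn a fixed additive cover of $S$ into a multiplicative cover of $S-S$ of the same size by means of a pigeonhole argument. Set $k:=\cov^{+}(S)$ and fix a set $X=\{x_1,\dots,x_k\}$ with $S+X=\mathcal{R}$. I will take as multiplicative cover the set $Y:=\{(j\cdot 1_{\mathcal{R}})^{-1} : 1\le j\le k\}$, where $1_{\mathcal{R}}$ denotes the unit of $\mathcal{R}$; each scalar $j\cdot 1_{\mathcal{R}}$ is invertible precisely by the hypothesis that $1,\dots,k=\cov^{+}(S)$ lie in $\mathcal{R}^*$, so $Y$ is well defined and $|Y|\le k$. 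It then suffices to prove $(S-S)\cdot Y=\mathcal{R}$, which gives $\cov^{\times}(S-S)\le|Y|\le k$.

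The key step is to show that every $r\in\mathcal{R}$ satisfies $j\cdot r\in S-S$ for some $j\in\{1,\dots,k\}$. First I would fix $r$ and consider the $k+1$ elements $0\cdot r,\,1\cdot r,\,\dots,\,k\cdot r$. Since $S+X=\mathcal{R}$, for each $m\in\{0,\dots,k\}$ one can write $m\cdot r=s_m+x_{i(m)}$ with $s_m\in S$ and an index $i(m)\in\{1,\dots,k\}$. As there are $k+1$ values of $m$ but only $k$ admissible indices, two of them must collide: there exist $0\le m_1<m_2\le k$ with $i(m_1)=i(m_2)$. Subtracting the two representations cancels the common translate $x_{i(m_1)}$ and yields $(m_2-m_1)\cdot r=s_{m_2}-s_{m_1}\in S-S$, so $j:=m_2-m_1\in\{1,\dots,k\}$ does what is required.

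To finish, I would use that integer multiples $j\cdot 1_{\mathcal{R}}$ are central in $\mathcal{R}$: from $j\cdot r\in S-S$ one gets $r=(j\cdot 1_{\mathcal{R}})^{-1}(j\cdot r)\in (S-S)\cdot(j\cdot 1_{\mathcal{R}})^{-1}\subseteq (S-S)\cdot Y$, the centrality of the scalar (and of its inverse) ensuring that the argument survives even if $\mathcal{R}$ is noncommutative; the case $r=0$ is immediate since $0\in S-S$. Hence $(S-S)\cdot Y=\mathcal{R}$, proving the bound. I expect the only nonroutine point to be the right choice of the $k+1$ test points $m\cdot r$: once the arithmetic progression $\{m\cdot r\}_{m=0}^{k}$ is tested against the additive cover, the forced collision of covering indices automatically manufactures a difference in $S-S$ that is a scalar multiple of $r$, which is exactly what the multiplicative cover needs.
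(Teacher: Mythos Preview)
Your proof is correct and follows essentially the same approach as the paper: both fix an additive cover $X$ of size $k$, test the $k+1$ points $0\cdot r,1\cdot r,\dots,k\cdot r$ against $S+X$, and use pigeonhole to force two of them into the same translate, producing $j\cdot r\in S-S$ for some $1\le j\le k$. Your write-up is slightly more careful (explicitly noting centrality of the scalars and handling $r=0$), but the argument is the same.
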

\begin{proof}
    Let $S+X = \Z_q$ and $|X| = \cov^{+} (S) := k$. 
    For any $g\in \Z_q$ consider  $jg$, where $j=0,1,\dots, k$.
    By the pigeonhole principle there are different $j_1 \neq j_2$ such that $j_1 g \in S+x$ and $j_2 g \in S+x$ with the same $x\in X$. 
    It implies that $(j_1-j_2)g \in S-S$ and hence  $g\in (j_1-j_2)^{-1} (S-S)$, provided $(j_1-j_2)^{-1} \in \mathcal{R}^*$. 
    It remains to notice that $[-k,k]^{-1} \cdot (S-S) = [k]^{-1} \cdot (S-S)$.
    This completes the proof. 
$\hfill\Box$
\end{proof}

\bp

By the well--known consequence of the Ruzsa covering lemma \cite[Section 2.4]{TV}, we have  for any finite group $\Gr$ and a set  $A\subseteq \Gr$ that for a certain set $Z\subseteq \Gr$ one has 
\begin{equation}\label{f:Ruzsa_cov}  
    \Gr \subseteq A-A + Z \,, \quad \quad |Z| \le |\Gr|/|A| \,.
\end{equation}
In particular, it means that $\cov^{+} (A-A) \le |\Gr|/|A|$. 
Thus Proposition \ref{p:cov+/*} gives us

\begin{corollary}
    Let $\mathcal{R}$ be a finite ring,  $A \subseteq \mathcal{R}$ be a set, and  $|A| = \a |\mathcal{R}|$.
    Then 
\[
    \cov^\times (2A-2A) \le \a^{-1} \,,
\]
    provided all numbers $1,\dots, [\a^{-1}]$ belong to $\mathcal{R}^*$.
\label{c:2A-2A_cov}
\end{corollary}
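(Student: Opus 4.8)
The plan is to reduce the corollary to the two ingredients assembled just above it: Proposition \ref{p:cov+/*} and the Ruzsa covering consequence \eqref{f:Ruzsa_cov}. The one structural observation needed is that $2A-2A$ is itself of the form $S-S$ for a suitable $S$. Indeed, taking $S=A-A$, a generic element of $S-S$ is $(a_1-a_2)-(a_3-a_4)=(a_1+a_4)-(a_2+a_3)$, which ranges over exactly $2A-2A$; conversely any element $(a_1+a_2)-(a_3+a_4)$ of $2A-2A$ equals $(a_1-a_3)-(a_4-a_2)\in(A-A)-(A-A)$. Hence $S-S=2A-2A$ with $S=A-A$.

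First I would apply Proposition \ref{p:cov+/*} to this $S=A-A$, which gives
\[
    \cov^\times(2A-2A)=\cov^\times((A-A)-(A-A))\le \cov^{+}(A-A) \,,
\]
valid as soon as all of $1,\dots,\cov^{+}(A-A)$ lie in $\mathcal{R}^*$.

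Next I would bound the additive covering number on the right. Viewing $\mathcal{R}$ as a finite abelian group under $+$, inclusion \eqref{f:Ruzsa_cov} with $\Gr=\mathcal{R}$ and the set $A$ yields $\cov^{+}(A-A)\le |\mathcal{R}|/|A|=\a^{-1}$. Since $\cov^{+}(A-A)$ is a nonnegative integer not exceeding $\a^{-1}$, we have $\cov^{+}(A-A)\le [\a^{-1}]$; thus the invertibility hypothesis on $1,\dots,[\a^{-1}]$ is exactly what is required to license the application of Proposition \ref{p:cov+/*} in the previous step. Chaining the two bounds gives $\cov^\times(2A-2A)\le \a^{-1}$.

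There is no serious obstacle here; the argument is a clean concatenation of the two preceding results. The only points demanding care are the algebraic identity $S-S=2A-2A$ for $S=A-A$, and the passage from the real inequality $\cov^{+}(A-A)\le \a^{-1}$ to the integer bound $\cov^{+}(A-A)\le[\a^{-1}]$, which is precisely what lets one invoke the hypothesis for exactly the integers $1,\dots,[\a^{-1}]$ rather than for a larger range.
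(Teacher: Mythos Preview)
Your argument is correct and matches the paper's intended proof: the corollary is obtained precisely by applying Proposition~\ref{p:cov+/*} with $S=A-A$ together with the Ruzsa covering bound $\cov^{+}(A-A)\le |\mathcal{R}|/|A|=\a^{-1}$. Your verification that $(A-A)-(A-A)=2A-2A$ and your handling of the invertibility hypothesis via the integer bound $\cov^{+}(A-A)\le[\a^{-1}]$ are exactly the details one needs to fill in.
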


Using 
the same 
method, one can 
estimate 
the multiplicative covering number of a Bohr set in $\Z_p$ ($p$ is a prime number)
\[
    \mathcal{B} (\Gamma, \eps) = \{ x\in \Z_p ~:~ \| x\gamma/p\| \le \eps,\, \forall \gamma \in \Gamma \} 
    \quad \eps \in (0,1],\, \quad \Gamma \subseteq \Z_p \,,
\]
namely, we have 
\[\cov^{\times} (\mathcal{B} (\Gamma, \eps)) \le \eps^{-|\Gamma|} \,.
\]

\bp 

It is interesting to decrease the number of summands in Corollary \ref{c:2A-2A_cov}. 
To this end 
let 
us obtain the main result of this section. 

\begin{theorem}
    Let $q$ be a positive integer, $A \subseteq \Z_q$ be a set, $|A| = \a q$.
    Suppose that the least prime factor of $q$ 
     greater than 
    $2 \a^{-1}+3$. 
    Then 
\begin{equation}\label{f:covm_A-A}  
    \cov^\times (A-A) \le \frac{1}{\a} + 1 \,.
\end{equation}
    More concretely,  $[k_*]^{-1} \cdot (A-A) = \Z_q$ for a certain $k_* \le \a^{-1}+1$. 
\label{t:covm_A-A}   
\end{theorem}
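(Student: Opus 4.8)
The plan is to unwind the statement $[k_*]^{-1}\cdot(A-A)=\Z_q$ into an elementary combinatorial claim and then settle it by a pigeonhole argument inside the cyclic subgroups generated by the elements of $\Z_q$. Set $k_*:=\lf \a^{-1}\rf$, so $k_*\le \a^{-1}$. Provided every $j\in[k_*]$ is invertible modulo $q$, the identity $[k_*]^{-1}\cdot(A-A)=\Z_q$ is equivalent to the following: for each $g\in\Z_q$ there is some $j\in\{1,\dots,k_*\}$ with $jg\in A-A$, since $j^{-1}d=g$ with $d\in A-A$ is the same as $jg=d\in A-A$. The case $g=0$ is trivial because $0\in A-A$, so I only need to treat $g\neq 0$.

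Fix $g\neq 0$ and let $H=\langle g\rangle$ be the cyclic subgroup it generates, of order $m=q/\gcd(g,q)$. Since $m$ is a divisor of $q$ bigger than $1$, the hypothesis on the least prime factor gives $m\ge p_{\min}>2\a^{-1}+3$. The cosets of $H$ partition $\Z_q$ into $q/m$ classes of size $m$, and since $\sum_{C}|A\cap C|=|A|=\a q$, there is a coset $C$ with $|A\cap C|\ge \a m$. Identifying $C$ with $\Z_m$ through $j\mapsto c+jg$ for a fixed base point $c\in C$, the set $A\cap C$ becomes a subset $P\subseteq\Z_m$ with $|P|\ge\a m$. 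Ordering the points of $P$ cyclically, their consecutive gaps sum to $m$, so the smallest gap is at most $m/|P|\le\a^{-1}$. A gap equal to $j$ means there are two points of $A\cap C$ whose difference is $jg$; hence $jg\in A-A$ for some integer $1\le j\le \lf \a^{-1}\rf = k_*$.

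It remains to verify the two places where the prime hypothesis is used. The gap $j$ lies in $\{1,\dots,m-1\}$ and equals a genuine difference of two $P$-positions (the circular gap from the last point back to the first still produces $jg\in A-A$ because $mg=0$), so no wraparound issue arises as long as $k_*<m$; this is guaranteed by $m>2\a^{-1}+3>\a^{-1}\ge k_*$. Moreover every $j\in[k_*]$ is coprime to $q$, because $j\le k_*\le\a^{-1}<p_{\min}$, so $j^{-1}$ exists and the reformulation of the first paragraph is legitimate. As $1,\dots,k_*$ are distinct invertible residues, $|[k_*]^{-1}|=k_*$, and therefore $\cov^\times(A-A)\le k_*\le\a^{-1}\le\a^{-1}+1$, as claimed.

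I expect the only delicate points to be bookkeeping: making sure the densest coset really carries density at least $\a$ (an averaging step) and confirming that the extracted gap $j$ is a small positive integer that is simultaneously invertible and free of wraparound — which is exactly what the generous lower bound on $p_{\min}$ secures, so the factor $2$ and the additive $3$ give comfortable slack rather than being sharp. An alternative, essentially equivalent route avoids cosets entirely: if no $j\in\{1,\dots,k_*\}$ had $jg\in A-A$, then the translates $A,A+g,\dots,A+k_*g$ would be pairwise disjoint, since $(i-i')g\notin A-A$ for $0<|i-i'|\le k_*$ by the symmetry of $A-A$, forcing $(k_*+1)\a\le 1$ and contradicting the choice of $k_*$. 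This version makes the bound $k_*\le\a^{-1}$ transparent while requiring the same invertibility input.
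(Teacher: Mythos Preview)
Your proof is correct and is essentially the paper's argument: your ``alternative route'' --- pigeonholing the translates $A, A+g,\dots,A+k_*g$ --- is exactly the paper's step for $n\in\Z_q^*$ (the paper phrases it as $|n\cdot\Lambda + A|\le q < |\Lambda||A|$ with $\Lambda=\{0,\dots,k_*\}$), and your observation that every nonzero $g$ has additive order $q/\gcd(g,q)\ge p_{\min}>k_*$ lets you treat all $g$ uniformly, whereas the paper handles non-units by passing to a dense subset $B\subseteq\Z_{q/q_1}$ and repeating the argument there. As a small bonus, your choice $k_*=\lfloor\alpha^{-1}\rfloor$ is one unit sharper than the paper's $k_*=\lceil\alpha^{-1}-1\rceil+1$ when $\alpha^{-1}\notin\Z$.
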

\begin{proof}
    Let $p_1$ be the least prime factor of $q$. 
    By our assumption we know that $p_1 \ge 2 \a^{-1}+3$. 
    Write $p_1=2k+1$ and take $\Lambda = \{0,1,\dots, k_*\}$, where $\lceil  \a^{-1} - 1 \rceil + 1 = k_* \le k$. 
    Then one has  $Y:= (\Lambda - \Lambda)\setminus \{0\} \subseteq \Z^*_q$.
    First of all, consider $n\in \Z_q^*$ and form the set $n\cdot \Lambda + A$.
    Since $|\Lambda||A| = (k_* +1) \a q > q$, it follows that there are different $\la_1,\la_2 \in \Lambda$ such that 
\[
    n\la_1 + a_1 \equiv  n\la_2  + a_2 \pmod q \,,
\]
    where $a_1, a_2 \in A$ and $a_1 \neq a_2$. 
    Hence $n\in Y^{-1} (A-A)$ and thus $\Z_q^* \subseteq Y^{-1} (A-A)$. 
    Also, notice that as in Proposition \ref{p:cov+/*}  one has   $Y^{-1} (A-A) = [k_*]^{-1} \cdot (A-A)$. 

    Now let $n=n' q_1$, where $q_1|q$ and $n'$ is coprime to $q$. 
    By the pigeonhole principle there is $B \subseteq \Z_{q/q_1}$ and $s\in \Z_q$ such that $q_1 B+s \subseteq A$ and the density of $B$ in $\Z_{q/q_1}$ is at least $\a$.  In particular, we have $q_1 (B-B) \subseteq A-A$. 
    By the same argument as above one has $n' \equiv y^{-1} (b_1-b_2) \pmod {q/q_1}$, where $y\in Y$ and $b_1,b_2 \in B$.
    It follows that $n \equiv y^{-1} (a_1-a_2) \pmod q$ as required.  
    Thus we have proved that $[k_*]^{-1} (A-A) = \Z_q$
    and hence 
    $\cov^\times (A-A) \le k_* \le \a^{-1}+1$.
    This completes the proof. 
$\hfill\Box$
\end{proof}

\begin{remark}
    After the paper was written the author was informed by A. Fish that Theorem \ref{t:covm_A-A} holds in greater generality, namely, for any measure preserving system the same is true for the set of return times of a set of positive measure. 
\label{r:Fish}
\end{remark} 

Theorem \ref{t:covm_A-A} implies a consequence about the multiplicative covering numbers of the intersections of difference sets  
in the spirit of paper \cite{{Stewart1983}}, see \cite[Theorems 1,3]{Stewart1983}.

\begin{corollary}
    Let $q$ be a positive integer, and $A_1,\dots, A_k \subseteq \Z_q$ be sets, $|A_i| = \a_i q$, $i\in [k]$. 
    Suppose that the least prime factor of $q$  
     greater than 
    $2 (\a_1 \dots \a_k)^{-1}+3$.
    Then 
\begin{equation}\label{f:A_i-A_i}
    \cov^{\times} \left(\bigcap_{i=1}^k (A_i - A_i) \right) \le \frac{1}{\a_1 \dots \a_k} + 1 \,. 
\end{equation}
\label{c:A_i-A_i}
\end{corollary}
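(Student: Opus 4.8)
The plan is to reduce Corollary \ref{c:A_i-A_i} to the single--set result Theorem \ref{t:covm_A-A} by producing one auxiliary set $A \sbeq \Z_q$ of density at least $\a_1 \dots \a_k$ whose difference set is contained in the intersection $\bigcap_{i=1}^k (A_i - A_i)$. Once such an $A$ is available, the monotonicity of the multiplicative covering number finishes the argument: if $S \sbeq T$ then $\cov^\times(T) \ls \cov^\times(S)$, since any set $X$ with $X \cdot S = \Z_q$ automatically satisfies $X \cdot T \supseteq X \cdot S = \Z_q$. As $A - A$ is the smaller set here and $\bigcap_i (A_i - A_i)$ the larger one, this inequality points in exactly the direction we need.

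To build $A$ I would use a routine averaging over translates. For a tuple of shifts $(s_1,\dots,s_k) \in \Z_q^k$ put $A(s_1,\dots,s_k) := \bigcap_{i=1}^k (A_i + s_i)$. Regardless of the shifts, the inclusion $A - A \sbeq (A_i + s_i) - (A_i + s_i) = A_i - A_i$ holds for every $i$, so $A - A \sbeq \bigcap_{i=1}^k (A_i - A_i)$ automatically; it therefore suffices to make $A$ large. Counting incidences, for each fixed $x \in \Z_q$ the number of shifts $s_i$ with $x \in A_i + s_i$ equals $|A_i|$, whence $\sum_{(s_1,\dots,s_k) \in \Z_q^k} |A(s_1,\dots,s_k)| = \sum_{x} \prod_{i=1}^k |A_i| = q \prod_{i=1}^k |A_i|$. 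The average size of $A(s_1,\dots,s_k)$ over the $q^k$ tuples is thus $q \prod_{i=1}^k \a_i$, so by the pigeonhole principle some choice of shifts yields a set $A$ of density at least $\a := \a_1 \dots \a_k$.

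With this $A$ fixed, the hypothesis on $q$ transfers verbatim: since the density of $A$ is at least $\a_1 \dots \a_k$, we have $2\a^{-1} + 3 \ls 2(\a_1 \dots \a_k)^{-1} + 3 < p_1$, where $p_1$ is the least prime factor of $q$. Hence Theorem \ref{t:covm_A-A} applies and gives $\cov^\times(A - A) \ls \a^{-1} + 1 \ls (\a_1 \dots \a_k)^{-1} + 1$; combined with $A - A \sbeq \bigcap_{i=1}^k (A_i - A_i)$ and the monotonicity above, this is exactly \eqref{f:A_i-A_i}. More concretely, the theorem produces $k_* \ls \a^{-1} + 1$ with $[k_*]^{-1} \cdot (A - A) = \Z_q$, and the same $k_*$ covers the intersection. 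The only substantive step is the construction of $A$, and I anticipate no real difficulty there since the translate averaging is standard; the one point demanding a little care is getting the direction of the covering inequality right under inclusion, which is why the intersection (the larger set) ends up with the smaller covering number.
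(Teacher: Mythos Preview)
Your argument is correct and is essentially the paper's own proof: the paper likewise intersects shifted copies $A_{\vec{s}} = A_1 \cap (A_2-s_1)\cap\dots\cap(A_k-s_{k-1})$, averages to find a tuple with density at least $\a_1\cdots\a_k$, observes $A_{\vec{s}}-A_{\vec{s}}\sbeq\bigcap_i(A_i-A_i)$, and applies Theorem~\ref{t:covm_A-A}. The only cosmetic difference is that you use $k$ shifts rather than $k-1$ (one of yours is redundant by translation invariance), which changes nothing.
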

\begin{proof}
    Put $A_{\vec{s}} = A_1 \cap (A_2-s_1) \cap \dots (A_k-s_{k-1})$, where $\vec{s} = (s_1,\dots, s_{k-1}) \in \Z^{k-1}_q$. 
    We have $\sum_{\vec{s}} |A_{\vec{s}}| = |A_1| \dots |A_k|$ and hence there is $\vec{s}_*$ such that $|A_{\vec{s}_*}| \ge \a_1 \dots \a_k q$. 
    Clearly, for any $\vec{s}$ one has 
\[
    A_{\vec{s}} - A_{\vec{s}} \subseteq \bigcap_{i=1}^k (A_i - A_i) \,.
\]
    Applying Theorem \ref{t:covm_A-A} with $A = A_{\vec{s}_*}$,  we obtain bound \eqref{f:A_i-A_i}. 
    This completes the proof. 
$\hfill\Box$
\end{proof}

\bp

As we have seen before 
Corollary \ref{c:2A-2A_cov}  and Theorem \ref{t:covm_A-A} give us some bounds for the multiplicative covering numbers of difference sets. 
On the other hand, one can see that Theorem \ref{t:covm_A-A} does not hold for, say, nonzero  shifts of Bohr sets, for the sumsets $A+A$, for the higher sumsets $nA$, $n>2$ and so on. 
Indeed, consider the following 

\bp 

{\bf Example.}
{\it 
Let $p$ be a prime number and $S = [p/3,2p/3)$ or $S=\pm [p/6,p/3)$ to make $S$ symmetric. 
Then the equation $a+b \equiv c \pmod p$ has no solutions in $a,b,c\in S$. 
Further, we have $|S| \gg p$ but it is easy to see that $\cov^\times (S)$ is unbounded. 
Indeed, if $SX = \Z_p$ for a set $X$ with $|X| = O(1)$, then we obtain a coloring of $\Z_p$ with a finite number of colors and every color has no solutions to our equation $a+b \equiv c \pmod p$. It gives us a contradiction with the famous  Schur theorem, see  \cite{schur1916kongruenz} (actually, it implies  $\cov^\times (S) \gg \log p/\log \log p$). 

In particular, we see that $\cov^\times (X+s)$ can be much larger than $\cov^\times (X)$ for a set $X$ and a nonzero $s$. 
}

\bp 

Proposition \ref{p:cov+/*} implies that any syndetic
(i.e. having bounded gaps between its consecutive elements)  
set $S\subseteq \F_p$, 
$|S| \gg p$ has $\cov^{\times} (S-S) = O(1)$. On the other hand, thanks to inclusion \eqref{f:Ruzsa_cov}  any set of the form $A-A$, where  $A\subseteq \F_p$, $|A| \gg p$ is  syndetic (with the gap depending on $A$ but not just on $p/|A|$, of course). Thus it is natural  to ask about 
a 
generalization of Theorem \ref{t:covm_A-A} to the family of  syndetic sets. Nevertheless, taking  $S=\{ 1+ kM\}_{k\in [(p-1)/M]}$, $M\ge 4$ and $p \equiv 2 \pmod M$, say, we see that $S$ is a syndetic set and $S$ has no solutions to the equation $a+b \equiv c \pmod p$. 
Thus as in the example above we see that $\cov^\times (S)$ is unbounded. 

\begin{remark}
    A dual form of Theorem \ref{t:covm_A-A} has no place, namely, there is a set $A\subseteq \Z_p$, $|A| \gg p$ such that $\cov^{+} (A/A) \gg \log p$.  
    In other words,  $\cov^{+} (A/A)$ is close to the maximal possible value. 
    To see this just take $A$ to be the set of all quadratic residues, e.g., see \cite[Proposition 14]{SS_higher}. 
\end{remark} 


Finally, let us give another proof of a variant of Theorem  \ref{t:Fish_intr} via our covering 
Theorem  \ref{t:covm_A-A}. 
Notice that the number $d$ below can be a non--divisor of $q$.

\begin{theorem}
    Let $q$ be a 
    positive integer, 
    $A, B\subset \Z_q$ be sets, $|A| = \a q$, $|B|=\beta q$, and let us assume  that $\a \ge \beta$. 
    Suppose that the least prime factor of $q$ 
     greater than 
    $2 \beta^{-1}+3$. 
    Then there is $d\neq 0$ 
    with  
\begin{equation}\label{f:Fish_p1_1}
    d \le \a^{-\beta^{-1} -1} \,,
\end{equation}
    and such that 
\begin{equation}\label{f:Fish_p1_2}
    d  \cdot \Z_q \subseteq (A-A)(B-B) \,.
\end{equation}
\label{t:Fish_p1}
\end{theorem}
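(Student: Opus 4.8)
The plan is to deduce the statement directly from the multiplicative covering Theorem \ref{t:covm_A-A}. The first observation is that, since $\alpha \ge \beta$, the hypothesis on the least prime factor $p_1$ of $q$, namely $p_1 > 2\beta^{-1}+3 \ge 2\alpha^{-1}+3$, is strong enough to let me apply Theorem \ref{t:covm_A-A} to \emph{both} sets. Applying it to $B$ produces an integer $\ell_* \le \beta^{-1}+1$ with $[\ell_*]^{-1}\cdot(B-B) = \Z_q$, and applying it to $A$ produces $k_* \le \alpha^{-1}+1$ with $[k_*]^{-1}\cdot(A-A) = \Z_q$.

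The key reduction is that it suffices to produce a \emph{single} $d$ with $d\cdot j^{-1} \in A-A$ for every $j \in [\ell_*]$, that is, $d\cdot[\ell_*]^{-1} \subseteq A-A$. Indeed, take any unit $\lambda \in \Z_q^*$. Since $[\ell_*]^{-1}(B-B) = \Z_q$, there is $j \in [\ell_*]$ with $j\lambda \in B-B$, and $j$ is invertible because $j \le \ell_* \le \beta^{-1}+1 < p_1$. Then the factorisation $d\lambda = (d\,j^{-1})(j\lambda)$ exhibits $d\lambda \in (A-A)(B-B)$, since $d\,j^{-1} \in A-A$ and $j\lambda \in B-B$. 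For a non-unit $\lambda$ I would write $\lambda = q'\lambda'$ with $q'\mid q$ and $\lambda'$ coprime to $q/q'$, then pass $A$ and $B$ to the module $q/q'$ by the Dirichlet/shift device already used at the end of Theorems \ref{t:Fish_new} and \ref{t:Fish-Vinh}, reducing to the unit case in the smaller ring.

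It remains to construct a small $d$ with $d\cdot[\ell_*]^{-1} \subseteq A-A$, equivalently a small element $d \in \bigcap_{j=1}^{\ell_*} j\cdot(A-A)$. Applying Theorem \ref{t:covm_A-A} to $A$ gives, for each $j$, a multiplier $i_j \le k_* \le \alpha^{-1}+1$ with $i_j\, j^{-1} \in A-A$, so each individual dilate is realised. I expect the crux of the proof to be exactly here: $A-A$ carries no multiplicative structure, so these $\ell_*$ separate witnesses cannot be merged into one $d$ by naively multiplying them, and this is the main obstacle. To overcome it I would run the covering pigeonhole $\ell_*$ times in a nested way, peeling off one denominator $j$ at a time and absorbing the new multiplier (of size at most $\alpha^{-1}+1$) into $d$ while keeping the memberships already secured; this realises $d$ as a product of at most $\ell_*$ factors each at most $\alpha^{-1}+1$. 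It is worth noting that the hypothesis $p_1 > 2\beta^{-1}+3$ forces every nontrivial divisor of $q$ to be large, which excludes the obstructive low-period configurations (a coset of a small-index subgroup, whose difference set would be a short progression) and is precisely what should make the simultaneous membership attainable. Collecting the factors yields the claimed bound $d \le \alpha^{-\beta^{-1}-1}$, and the non-unit case then follows from the reduction above, completing the argument.
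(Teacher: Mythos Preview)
Your reduction is exactly the paper's: apply Theorem \ref{t:covm_A-A} to $B$ to get a set $X=[\ell_*]^{-1}$ of size $n\le \beta^{-1}+1$ with $X(B-B)=\Z_q$, and then search for a single $d$ with $d\cdot X\subseteq A-A$; once this is done, $(A-A)(B-B)\supseteq d\cdot X(B-B)=d\cdot \Z_q$ immediately. Two remarks on the packaging: first, since Theorem \ref{t:covm_A-A} already gives $X(B-B)=\Z_q$ (not just $\Z_q^*$), your whole treatment of non-units via the $q'$-reduction is unnecessary; second, the separate application of Theorem \ref{t:covm_A-A} to $A$ is never used in the paper and, as you yourself note, produces only \emph{individual} witnesses $i_j j^{-1}\in A-A$ that cannot be combined.

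The genuine gap is precisely the step you flag as ``the crux''. Your proposed nested scheme --- ``absorb a new multiplier of size at most $\alpha^{-1}+1$ into $d$ while keeping the memberships already secured'' --- does not work: if $d/j\in A-A$ and you replace $d$ by $md$, then $md/j=m\cdot(d/j)$, and there is no reason $m\cdot(A-A)\subseteq A-A$. So each new factor destroys all previously secured inclusions, and the iteration collapses. The paper circumvents this with a single pigeonhole in $\Z_q^{\,n}$: writing $X=\{x_1,\dots,x_n\}$ and $\vec{x}=(x_1,\dots,x_n)$, one looks at the translates $A^n+j\vec{x}\subseteq \Z_q^{\,n}$ for $j=1,2,\dots$; since $|A^n|=(\alpha q)^n$, two of these meet for some $0<d\le \alpha^{-n}$, and the resulting coincidence in every coordinate gives $d\cdot x_i\in A-A$ for all $i$ simultaneously. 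This yields the stated bound $d\le \alpha^{-n}\le \alpha^{-\beta^{-1}-1}$ directly (your product-of-factors heuristic would only give $(\alpha^{-1}+1)^{\ell_*}$, which does not match the claim).
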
 
\begin{proof} 
    Applying   Theorem  \ref{t:covm_A-A} with $A=B$, we find a set $X\subseteq \Z_q$, $n:=|X| \le \beta^{-1} +1$ such that $X(B-B) = \Z_q$. 
    Let $X = \{ x_1,\dots,x_n\}$ and $\vec{x} = (x_1,\dots,x_n) \in \Z^n_q$. 
    Considering the collection of the sets $A^n + j \cdot \vec{x} \subseteq \Z^n_q$, $j\ge 1$, we see that there is $0<d \le \a^{-n}$ with $d\cdot X \subseteq A-A$. 
    Hence 
\[
    (A-A)(B-B) \supseteq d\cdot X (B-B) \supseteq d \cdot \Z_q 
\]
    as required. 
    It remains to notice that 
\[
    d \le \a^{-n} \le \a^{-\beta^{-1} -1} \,.
\]
    This completes the proof. 
$\hfill\Box$
\end{proof}

\section{Concluding remarks}
\label{sec:concluding}

Let us discuss other approaches to Theorem \ref{t:Fish_intr}.
First of all, recall the well--known Furstenberg's result \cite{furstenberg2014recurrence}. 

\begin{theorem}
    Let $n$ be a positive integer, $\d \in (0,1]$ be a real number, and $S$ be a set of size $n$. 
    Then for all sufficiently large $N \ge N(\d,n)$ an arbitrary set 
    $A\subseteq [N]\times [N]$, $|A| \ge \delta N^2$ contains the set $\a+\beta\cdot S$ for some $\a$ and $\beta \neq 0$. 
\label{t:Sz_mult}
\end{theorem}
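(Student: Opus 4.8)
The plan is to recognise Theorem \ref{t:Sz_mult} as the two--dimensional case of the homothetic multidimensional Szemer\'edi theorem and to derive it from the Furstenberg correspondence principle together with the Furstenberg--Katznelson multiple recurrence theorem for several commuting transformations. Since $\a + \b \cdot S$ is just a homothetic copy of the fixed $n$--point configuration $S \subseteq \Z^2$, the assertion is precisely that every dense subset of $[N]^2$ contains such a copy with $\b \ge 1$.

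First I would reduce the finitary statement to an infinitary one. By a routine compactness argument the formulation over $[N]^2$ for all large $N$ is equivalent to the claim that every set $A \subseteq \Z^2$ of positive upper Banach density contains a copy $\a + \b \cdot S$ with $\b \ge 1$, and it is this infinitary version I would prove. Applying the correspondence principle to such an $A$ produces a probability space $(X,\mathcal{B},\mu)$ carrying two commuting invertible measure--preserving transformations $T,U$ (the two coordinate shifts of the $\Z^2$--action) and a measurable set $E$ with $\mu(E) = d^*(A) > 0$. Writing $\tau^{v} = T^{a} U^{b}$ for the shift by $v=(a,b)\in\Z^2$, the correspondence furnishes the inequality
\[
    d^*\Big( \bigcap_{i=1}^n (A - v_i) \Big) \ge \mu\Big( \bigcap_{i=1}^n \tau^{-v_i} E \Big)
\]
valid for any finite family of shift vectors $v_1,\dots,v_n \in \Z^2$.

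Next I would identify the correct commuting family. Writing $S = \{ s_1,\dots,s_n\}$ with $s_i = (a_i,b_i)$, a homothety $\a + \b s_i$ corresponds to the vector $v_i = \b s_i$ and hence to $\tau^{\b s_i} = (T^{a_i} U^{b_i})^{\b} = R_i^{\b}$ with $R_i := T^{a_i} U^{b_i}$. All the $R_1,\dots,R_n$ lie in the abelian group generated by $T$ and $U$, so they pairwise commute, and the Furstenberg--Katznelson multiple recurrence theorem applied to this family yields
\[
    \liminf_{N\to\infty} \frac{1}{N} \sum_{\b=1}^{N} \mu\Big( R_1^{-\b} E \cap \dots \cap R_n^{-\b} E \Big) > 0 \,.
\]
In particular $\mu\big( \bigcap_{i=1}^n R_i^{-\b} E \big) > 0$ for some $\b \ge 1$; feeding $v_i = \b s_i$ into the correspondence inequality then forces $d^*\big( \bigcap_i (A - \b s_i) \big) > 0$, so there exists $\a$ with $\a + \b s_i \in A$ for every $i$, which is the desired copy.

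The genuinely hard ingredient is the multiple recurrence theorem itself, whose proof rests on the structure theory of measure--preserving systems---the Furstenberg tower and the dichotomy between compact and weakly mixing extensions; I would invoke it as a black box, exactly as the wording of Theorem \ref{t:Sz_mult} suggests. The only other points needing care are standard: arranging the correspondence so that $\mu(E)$ equals the density and the intersection inequality holds, and the compactness step passing between the finitary and infinitary formulations.
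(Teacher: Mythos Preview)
The paper does not prove Theorem \ref{t:Sz_mult} at all: it is quoted as ``the well--known Furstenberg result'' with a citation to \cite{furstenberg2014recurrence} and is used only as a black box in the proof of Corollary \ref{c:Fish_density}. Your sketch --- compactness to pass to positive upper Banach density in $\Z^2$, the Furstenberg correspondence principle to obtain a $\Z^2$--system $(X,\mu,T,U)$ and a set $E$ with $\mu(E)>0$, and then Furstenberg--Katznelson multiple recurrence applied to the commuting maps $R_i = T^{a_i}U^{b_i}$ --- is exactly the standard ergodic proof of this homothetic (Gallai--type) Szemer\'edi theorem, and it is correct as outlined; the only substantive input, the Furstenberg--Katznelson theorem, is rightly invoked as a black box.
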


Quantitative bounds for 
$N(\d,n)$
from  
Theorem \ref{t:Sz_mult} can be found in \cite{shelah1988primitive}.

\begin{corollary}
    Let $q$ be a prime number, $\mathcal{A} \subseteq \F^2_q$, $|\mathcal{A}| = \d q^2$ and $A,B \subseteq \F_q$, $|A| = \a_* q$, $|B| = \beta_* q$. 
    Then there is a decreasing positive function $\_phi$ such that 
    if $\min \{\a_*,\beta_*,\d\} \ge \_phi (q)$, then formula \eqref{f:F=(A-A)(B-B)} takes place for $\mathcal{B} = A\times B$, any $\la \in \Z^*_q$ and $d=1$. 
\label{c:Fish_density}
\end{corollary}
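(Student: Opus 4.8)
The plan is to deduce Corollary \ref{c:Fish_density} from Furstenberg's Theorem \ref{t:Sz_mult} by a direct transfer to the finite field setting. The key observation is that the equation in \eqref{f:F=(A-A)(B-B)} with $\mathcal{B} = A \times B$ asks, for a fixed $\la \in \Z^*_q$, to find $(a_1,a_2) \in \mathcal{A}$ and $(b_1,b_2) \in A \times B$ with $(a_1 - b_1)(a_2 - b_2) = \la$. First I would rewrite this as a statement about the set $\mathcal{A}$ containing a dilated-translated copy of a fixed configuration. Indeed, setting $b_1 = u \in A$ and $b_2 = v \in B$, the condition becomes $a_1 = u + s$ and $a_2 = v + \la s^{-1}$ for some $s \in \Z^*_q$; that is, as $s$ ranges over $\Z_q^*$ the points $(u + s, v + \la/s)$ trace out a hyperbola, and we want $\mathcal{A}$ to meet one such hyperbola anchored at a point of $A \times B$.

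The main idea is to convert the multiplicative ``hyperbola'' pattern into the additive pattern $\a + \b \cdot S$ handled by Theorem \ref{t:Sz_mult}. I would choose a finite set $S$ of size $n$ inside $\F_q$ (thought of as sitting inside $[N]$ after a suitable model is fixed) encoding sample points of the hyperbola, and then apply Theorem \ref{t:Sz_mult} to find $\a, \b \neq 0$ with $\a + \b \cdot S \subseteq \mathcal{A}$ (after intersecting $\mathcal{A}$ with an appropriate large piece so that the positive-density hypothesis $|\mathcal{A}| \ge \d q^2$ is usable). The role of the densities $\a_*, \b_*$ of $A$ and $B$ is to guarantee that among the parameters produced by Furstenberg one can align an anchor point $(u,v)$ with $u \in A$ and $v \in B$; this is where the three-fold density condition $\min\{\a_*, \b_*, \d\} \ge \_phi(q)$ enters, with $\_phi$ being the (decreasing) function coming from the quantitative bound $N(\d, n)$ in \cite{shelah1988primitive}.

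Concretely, the steps in order would be: (i) fix $\la \in \Z_q^*$ and reformulate membership $\la \in \{(a_1 - b_1)(a_2 - b_2)\}$ as the existence of a copy of a specific $n$-point configuration inside $\mathcal{A}$ that is simultaneously compatible with the product structure $A \times B$; (ii) pass from $\F_q^2$ to a $[N] \times [N]$ model so Theorem \ref{t:Sz_mult} literally applies, taking $N$ comparable to $q$; (iii) invoke Theorem \ref{t:Sz_mult} with the quantitative bound from \cite{shelah1988primitive} to extract the affine copy $\a + \b \cdot S \subseteq \mathcal{A}$, $\b \neq 0$; (iv) use the densities of $A$ and $B$ to locate the anchor and thereby produce the desired solution, yielding \eqref{f:F=(A-A)(B-B)} with $d = 1$ for every $\la \in \Z_q^*$.

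The hard part will be step (i) and step (ii): one must design the configuration $S$ so that an affine image $\a + \b \cdot S$ inside $\mathcal{A}$ really does encode a genuine solution of the multiplicative equation, rather than merely an additive pattern, and one must make sure that reducing modulo $q$ does not destroy the structure guaranteed in $[N]$ (wrap-around effects). The translation between the multiplicative hyperbola parametrization $s \mapsto (u+s, v+\la/s)$ and an \emph{affine} pattern $\a + \b \cdot S$ is not immediate, since the second coordinate depends nonlinearly on $s$; I expect to resolve this by freezing the sampling set $\{s_1, \dots, s_n\}$ in advance so that the displacements $(s_i, \la/s_i)$ form the fixed set $S$ and the common translation-dilation absorbs the choice of anchor, at the cost of a dimension-$n$ configuration whose size $n$ controls (through $N(\d,n)$) the final threshold function $\_phi(q)$.
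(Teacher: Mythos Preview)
Your plan has a real gap at the junction of steps (i) and (iv). If you take $S=\{(s_i,\la/s_i)\}_{i\le n}$ and apply Theorem~\ref{t:Sz_mult}, you obtain some $\a=(\a_1,\a_2)$ and $\b\neq 0$ with $\a+\b\cdot S\subseteq\mathcal{A}$; setting $t=\b s_i$, each such point $(a_1,a_2)$ satisfies $(a_1-\a_1)(a_2-\a_2)=\b^{2}\la$. Thus Furstenberg hands you points of $\mathcal{A}$ on a hyperbola with the \emph{uncontrolled} anchor $(\a_1,\a_2)$ and the \emph{uncontrolled} parameter $\b^{2}\la$. Nothing in Theorem~\ref{t:Sz_mult} or in \cite{shelah1988primitive} forces $\a_1\in A$ and $\a_2\in B$; your step (iv), ``use the densities of $A$ and $B$ to locate the anchor'', is precisely the missing lemma, and it does not follow from anything you have invoked. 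Even granting the anchor, you would have solved the equation only for the single value $\b^{2}\la$, and since your $S$ depends on $\la$ you cannot simply rename and iterate.

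The paper's proof proceeds differently. It takes $S$ to be the grid $[k]\times[k]$ (or the diagonal $\{(2j,2j):j\in[k]\}$), \emph{independent of $\la$}, and applies Theorem~\ref{t:Sz_mult} once to get $\a+\b\cdot S\subseteq\mathcal{A}$. Choosing $(a_1,a_2)=(\a_1+\b t_1,\a_2+\b t_2)$ reduces \eqref{f:F=(A-A)(B-B)} to finding $(t_1,t_2)\in S$, $a\in\b^{-1}(A-\a_1)$, $b\in\b^{-1}(B-\a_2)$ with $(t_1-a)(t_2-b)=\la\b^{-2}$. The affine images of $A,B$ keep densities $\a_*,\beta_*$, the variables $t_1,t_2$ run over the interval $[k]$, and this auxiliary sum--product equation is solved not by Furstenberg but by the Bourgain--Gamburd machine \cite{bourgain2008uniform}, \cite{s_BG}, once $k\gg\min\{\a_*,\beta_*,\d\}^{-C}$. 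That external ingredient --- which simultaneously handles the anchor alignment and the arbitrariness of $\la$ --- is what your outline is missing.
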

\begin{proof}
    Take $S=S_1=[k]\times [k]$ or $S=S_2 = \{ (2j,2j) ~:~ j\in [k]\}$ for a certain positive integer $k$. 
    Applying Theorem \ref{t:Sz_mult} with $n=|S|$ and $A=\mathcal{A}$, we see that  for some $\a$, $\beta \neq 0$ the following holds  $\a+\beta\cdot S \subseteq \mathcal{A}$ and hence to solve \eqref{f:F=(A-A)(B-B)} with $d=1$ it is sufficiently to find for any $\la \in \Z^*_q$ some elements $a\in \beta^{-1} (A-\a)$, $b\in \beta^{-1} (B-\a)$ and $(t_1,t_2) \in S$ such that 
\[
    (t_1 - a) (t_2-b) \equiv \la \pmod q \,. 
\]
    If for a certain absolute constant $C>0$ one has $k \gg \min^{-C} \{\a_* ,\beta_*,\d\}$, then
    for $S=S_2$ the last equation has a solution thanks to the famous Bourgain--Gamburd machine \cite{bourgain2008uniform} (see details in \cite{s_BG}, say) and for $S=S_1$ (actually, for any dense subset of $S_1$) the latter fact was 
    obtained 
    in \cite[Theorem 3]{s_BG}.  
    This completes the proof. 
$\hfill\Box$
\end{proof}

\bp 

    The author does not know  how to obtain Corollary \ref{c:Fish_density} for composite $q$ because there is no control over divisors of $\beta$ in Theorem \ref{t:Sz_mult}.  It would be interesting to say something about prime factors of the dilation $\beta$. 

\bp 

We finish this section with a problem (it is interesting in its own right from a combinatorial point of view), which potentially gives another proof of Corollary \ref{c:Fish_density} thanks to  \cite[Theorem 3]{s_BG}.

\bp 

{\bf Problem.} 
{\it
 Let $n$ be a positive integer, $\d,\kappa \in (0,1]$ be real numbers.
    Then for all sufficiently large $N\ge N(\d,\kappa,n)$  an arbitrary set 
    $A\subseteq [N]\times [N]$, $|A| \ge \delta N^2$ contains the set $\a+\beta\cdot S$ for some $\a$ and $\beta$, where $S \subseteq [n]\times [n]$ is any set of  size $n^{1+\kappa}$.     
}

\bp 

Of course, some estimates on $N(\d,\kappa,n)$ follow from Theorem \ref{t:Sz_mult} but maybe it is possible to obtain a better bound. 


\bibliographystyle{abbrv}

\bibliography{main}{}

\begin{thebibliography}{10}

\bibitem{bollobas2011covering}
B.~Bollob{\'a}s, S.~Janson, and O.~Riordan.
\newblock On covering by translates of a set.
\newblock {\em Random Structures \& Algorithms}, 38(1-2):33--67, 2011.

\bibitem{bourgain2008sum}
J.~Bourgain.
\newblock {The sum-product theorem in $\mathbb{Z}_q$ with $q$ arbitrary}.
\newblock {\em Journal d'Analyse Math{\'e}matique}, 106(1):1--93, 2008.

\bibitem{bourgain2008expansion}
J.~Bourgain and A.~Gamburd.
\newblock Expansion and random walks in $\mathrm{SL}_d (\mathbb{Z}/p^n
  \mathbb{Z})$: I.
\newblock {\em Journal of the European Mathematical Society}, 10(4):987--1011,
  2008.

\bibitem{bourgain2008uniform}
J.~Bourgain and A.~Gamburd.
\newblock Uniform expansion bounds for cayley graphs of $\mathrm{SL}_2
  (\mathbb{F}_p)$.
\newblock {\em Annals of Mathematics}, pages 625--642, 2008.

\bibitem{fish2018product}
A.~Fish.
\newblock On product of difference sets for sets of positive density.
\newblock {\em Proceedings of the American Mathematical Society},
  146(8):3449--3453, 2018.

\bibitem{furstenberg2014recurrence}
H.~Furstenberg.
\newblock {\em Recurrence in ergodic theory and combinatorial number theory},
  volume~10.
\newblock Princeton University Press, 2014.

\bibitem{gyarmati2008equationsI}
K.~Gyarmati and A.~S{\'a}rk{\"o}zy.
\newblock {Equations in finite fields with restricted solution sets. I
  (Character sums)}.
\newblock {\em Acta Mathematica Hungarica}, 118(1-2):129--148, 2008.

\bibitem{gyarmati2008equationsII}
K.~Gyarmati and A.~S{\'a}rk{\"o}zy.
\newblock {Equations in finite fields with restricted solution sets. II
  (Algebraic equations)}.
\newblock {\em Acta Mathematica Hungarica}, 119(3):259--280, 2008.

\bibitem{hart2007sum}
D.~Hart, A.~Iosevich, and J.~Solymosi.
\newblock {Sum-product estimates in finite fields via Kloosterman sums}.
\newblock {\em International Mathematics Research Notices},
  2007(9):rnm007--rnm007, 2007.

\bibitem{pach2013ramsey}
P.~P. Pach.
\newblock Ramsey type results on the solvability of certain equation in
  $\mathbb{Z}_m$.
\newblock {\em Annual Volume 2013}, 13:41, 2013.

\bibitem{sarkozy2005sums}
A.~S{\'a}rk{\"o}zy.
\newblock On sums and products of residues modulo $p$.
\newblock {\em Acta Arithmetica}, 4(118):403--409, 2005.

\bibitem{SS_higher}
T.~Schoen and I.~D. Shkredov.
\newblock Higher moments of convolutions.
\newblock {\em J. Number Theory}, 133(5):1693--1737, 2013.

\bibitem{schur1916kongruenz}
I.~Schur.
\newblock {\"U}ber die kongruenz $x^m+ y^m \equiv z^m \pmod p$, jber.
\newblock {\em Deutsch. Math. Verein}, 25:114--116, 1916.

\bibitem{shelah1988primitive}
S.~Shelah.
\newblock Primitive recursive bounds for van der waerden numbers.
\newblock {\em Journal of the American Mathematical Society}, 1(3):683--697,
  1988.

\bibitem{sh_mono}
I.~D. Shkredov.
\newblock {On monochromatic solutions of some nonlinear equations in
  $\mathbb{Z}/p\mathbb{Z}$}.
\newblock {\em Mat. Zametki}, 88(4):625--634, 2010.

\bibitem{sh_Kloosterman}
I.~D. Shkredov.
\newblock {Modular hyperbolas and bilinear forms of Kloosterman sums}.
\newblock {\em Journal of Number Theory}, 220:182--211, 2021.

\bibitem{s_BG}
I.~D. Shkredov.
\newblock {On a girth–free variant of the Bourgain--Gamburd machine}.
\newblock {\em arXiv:2111.05751v1}, 2022.

\bibitem{shparlinski2008solvability}
I.~E. Shparlinski.
\newblock On the solvability of bilinear equations in finite fields.
\newblock {\em Glasgow Mathematical Journal}, 50(3):523--529, 2008.

\bibitem{Stewart1983}
C.~L. Stewart and R.~Tijdeman.
\newblock {\em On density-difference sets of sets of integers}, pages 701--710.
\newblock Birkh{\"a}user Basel, Basel, 1983.

\bibitem{TV}
T.~Tao and V.~Vu.
\newblock {\em Additive combinatorics}, volume 105 of {\em Cambridge Studies in
  Advanced Mathematics}.
\newblock Cambridge University Press, Cambridge, 2006.

\bibitem{vinh2011szemeredi}
L.~A. Vinh.
\newblock {The Szemer\'{e}di--Trotter type theorem and the sum-product estimate
  in finite fields}.
\newblock {\em European Journal of Combinatorics}, 32(8):1177--1181, 2011.

\end{thebibliography}


\begin{thebibliography}{99}



\bibitem{BG}
{\sc J. Bourgain, A. Gamburd, } 
{\em Uniform expansion bounds for Cayley graphs of $SL_2(\F_p)$, } 
Ann. of Math., 167(2):625--642, 2008.


\bibitem{Fish}
{\sc A. Fish, }
{\em On product of difference sets for sets of positive density, } Proceedings of the American Mathematical Society 146, no. 8 (2018): 3449--3453.


\bibitem{s_BG}
{\sc I.D. Shkredov, }
{\em On a girth–free variant of the Bourgain–Gamburd machine, } 
arXiv:2111.05751v1 [math.NT] 10 Nov 2021.


\bibitem{Vinh} {\sc L. A. Vinh,}
{\em The Szemer\'edi-Trotter type theorem and the sum-product estimate in finite fields,}  European J. Combin. {\bf 32}(8): 1177--1181, 2011.


\end{thebibliography}

\end{document}